\newtheorem{theorem}{Theorem}[section]
\newtheorem{lemma}[theorem]{Lemma}
\newtheorem{corollary}[theorem]{Corollary}
\newtheorem{thm}{Theorem}[section]
\newtheorem{rem}[thm]{Remark}
\numberwithin{equation}{section}
\newcommand{\al}{\alpha}
\newcommand{\ld}{\lambda}
\newcommand{\de}{\delta}
\newcommand{\ep}{\varepsilon}
\newcommand{\Si}{\Sigma}
\newcommand{\om}{\omega}
\newcommand{\Om}{\Omega}
\newcommand{\ga}{\gamma}
\newcommand{\Ga}{\Gamma}
\newcommand{\ka}{\kappa}
\renewcommand{\th}{\theta}
\newcommand{\Th}{\Theta}
\newcommand{\F}{\mathcal{F}}
\renewcommand{\P}{\mathcal{P}}
\renewcommand{\S}{\mathscr{S}}
\newcommand{\g}{\mathfrak{g}}
\newcommand{\C}{\mathscr{C}}
\newcommand{\D}{\mathbb{D}}
\newcommand{\U}{\mathbb{S}}
\DeclareMathOperator{\Hol}{Hol}
\newcommand{\Real}{\mathbb{R}}
\newcommand{\Integer}{\mathbb{Z}}
\newcommand{\norm}[1]{\Vert#1\Vert}
\def\<{\left\langle} \def\>{\right\rangle}
\def\({\left(} \def\){\right)}
\newcommand{\n}{\nabla}
\newcommand{\p}{\partial}
\newcommand{\pa}{\partial_{\th,\al}}
\newcommand{\na}{\n_A}
\newcommand{\A}{\mathscr{A}}
\renewcommand{\S}{\mathscr{S}}
\begin{document}
\title{Isolated Singularities of Yang-Mills-Higgs fields on surfaces}
\author{Bo Chen}
\address{Academy of Mathematics and Systems Science, Chinese Academy of Sciences, BeiJing,100190, P.R.China}
\email{chenbo@amss.ac.cn}
	
\author{Chong Song}
\address{School of Mathematical Sciences, Xiamen University, Xiamen, 361005, P.R.China.}
\email{songchong@xmu.edu.cn}
	
	
	
\date{\today}
	
\begin{abstract}
We study isolated singularities of two dimensional Yang-Mills-Higgs fields defined on a fiber bundle, where the fiber space is a compact Riemannian manifold and the structure group is a compact connected Lie group. In
general the singularity can not be removed due to possibly non-vanishing limit holonomy around the singular points. We establish a sharp asymptotic decay estimate of the Yang-Mills-Higgs field near a singular point,
where the decay rate is precisely determined by the limit holonomy. Our result can be viewed as a generalization of the classical removable singularity theorem of two dimensional harmonic maps.
\end{abstract}
	
	
\maketitle

\section{Introduction}
	
Suppose $\Sigma$ is a Riemannian manifold, $G$ is a compact Lie group with Lie algebra $\g$, which is endowed with a bi-invariant metric, and $\P$ is a $G$-principal bundle over $\Sigma$. Let $M$ be a Riemannian manifold
admitting a $G$-action, and $\F=\P\times_G M$ be the associated fiber bundle. Suppose there is a generalized Higgs potential $\mu$ which is just a smooth gauge invariant function on $\F$. Let $\S$ denote the space of smooth
sections of $\F$, and $\A$ denote the affine space of smooth connections on $\P$. Then the \emph{Yang-Mills-Higgs(YMH)} functional is defined for a pair $(A,\phi)\in \A\times\S$ by
\begin{equation}\label{de:ymh}
\mathcal{YMH}(A,\phi):=\lVert \n_A\phi\rVert_{L^2}^2+\lVert F_A\rVert_{L^2}^2+\lVert \mu(\phi)\rVert_{L^2}^2,
\end{equation}
where $\n_A$ is the covariant differential induced by $A$ and $F_A$ is the corresponding curvature 2-form. Critical points of the YMH functional are called \emph{YMH fields}, which satisfy the following Euler-Lagrange equation on
$\Sigma$:
\begin{equation}\label{eq:EL}
\begin{cases}
\n_A^*\n_A\phi+\mu(\phi)\cdot \n\mu(\phi)=0,\\
D_A^*F_A+\phi^*\n_A\phi=0.
\end{cases}
\end{equation}
Here $D_A$ is the exterior derivative and $D_A^*$ is its adjoint operator. The term $\phi^*\n_A\phi$ takes its value in the dual space of $\Om^1(\P\times_{ad}\g)$, namely, for all $B\in \Om^1(\P\times_{ad}\g)$, we have
\[ \<\phi^*\n_A\phi, B\>=\<\n_A\phi, B\phi\>.\]

The YMH theory arises from the research of electromagnetic phenomena and plays a fundamental role in modern physics, especially in quantum field theories. In mathematics, it generalizes the pure Yang-Mills theory and
naturally extends the classical harmonic map theory to the gauged setting, which leads to profound applications in both geometry and topology. Indeed, when the fiber space is a point, then the YMH fields reduce to pure
Yang-Mills fields; when the structure group $G$ is trivial and $\mu=0$, the YMH fields are just harmonic maps from $\Sigma$ to the fiber space $M$.
		
In this paper, we study two dimensional YMH fields with isolated singularities, where $\Sigma$ is a Riemann surface. This type of singular YMH fields naturally emerges as the limit of a sequence of two dimensional YMH
fields with finite energy, if we allow the conformal structure of the underlying surface to vary, see~\cite{Song2}.

The problem on removable singularities of harmonic maps and Yang-Mills fields have been extensively studied and the results are by now quite standard. It is well-known an isolated singularity of a two dimensional harmonic
map with finite energy is removable~\cite{SU}, and an isolated singular point of a four dimensional pure Yang-Mills field with finite energy is also removable~\cite{Uhlenbeck}. In other words, a two dimensional harmonic map
(resp. a four dimensional Yang-Mills field) with finite energy defined on a punctured disk can be extended across the singular point to a smooth harmonic map (resp. Yang-Mills field) on the whole disk.  Analogous results
also hold for four dimensional coupled Yang-Mills equations (cf.~\cite{Parker1}).
	
On the other hand, if the singular set has codimension two, then the singularity of a Yang-Mills field is in general not removable due to possible non-trivial holonomy around singular points. Thus the removability of a
codimension two singular set of a pure Yang-Mills field or coupled Yang-Mills fields can only be achieved by assuming the limit holonomy vanishes (cf. \cite{Sibner1, Sibner2, Sibner3, Smith}). In the general case where the
singularity is not removable, Sibner and Sibner~\cite{Sibner4} gave a classification of singular Sobolev connections by their limit holonomy. This result was later reproved by R{\aa}de~\cite{Rade2,Rade3}, where he was able
to give an optimal estimate of singular pure Yang-Mills fields in dimension four.

Since an isolated point on a surface has codimension two, similar obstructions also arise for two dimensional YMH field with point singularities. If we assume the connection is continuous across the singular point, then the
singularity is removable~\cite{Song1}. Similar results were also abtained for minimal YMH fields in the symplectic setting, which are often referred as \emph{symplectic vortices}~\cite{Ott}. However, in general, one can not
expect that the limit holonomy around the singular point to be trivial. In our previous work, we are able to extract, besides the limit holonomy, certain limit data at the singular point(see Theorem 5.5 of \cite{Song2}, and
also Theorem 1.1 of \cite{Mundet-Tian} for symplectic vortices), but the asymptotic behavior of the YMH fields near the singular points are still not clear.

In this paper, inspired by R{\aa}de's work, we achieve a sharp asymptotic decay estimate of two dimensional YMH fields near isolated singular points. Moreover, we show that the decay rate is precisely determined by the
limit holonomy. In particular, if the limit holonomy is identity then our estimate reduces to a $C^1$-bound of the YMH field at the singular point, which directly implies the removability of singularities. Thus our
result provides a new proof and a non-trivial generalization of the classical removable singularity theorem for two dimensional harmonic maps.

\

Our main result can be stated in the following simple setting since the problem is local in nature. Let $\D\subset \Real^2$ be the unit open disk and $\D^*=\D\setminus\{0\}$ be the punctured unit disk. Let $\P$ be a
principal $G$-bundle over $\D^*$ and $\F=\P\times_G M$ be the associated bundle with fiber $M$. Again we denote the space of smooth sections of $\F$ by $\S$ , and the affine space of smooth connections on $\P$ by $\A$. Since we assume $G$ is connected, the bundle $\P$ and $\mathcal{F}$ is actually trivial. Thus under a fixed trivialization, a section $\phi\in \S$ can be identified with a map $u:\D^*\to M$, while a connection $A\in
\A$ is just a $\g$-valued 1-form. Moreover, since the Higgs potential term $\mu$ does not affect on our analysis and main results, we will simply set $\mu=0$.

Then the YMH functional (\ref{de:ymh}) becomes
\[ E(A, u)=\int_{\D^*}(|\n_A u|^2+|F_A|^2)dv\]
and the Euler-Lagrangian equation \eqref{eq:EL} reduces to
\begin{equation}\label{eq:EL1}
	\begin{cases}
	\n_A^*\n_Au=0,\\
	D_A^*F_A+u^*\n_Au=0.
	\end{cases}
	\end{equation}
Thus a pair $(A,u)\in \A\times \S$ is called a YMH field on $\D^*$ with an isolated singularity at the origin if it satisfies equation \eqref{eq:EL1} in $\D^*$.

Let $(r,\th)\in (0,1)\times \U^1$ be the polar coordinate in $\D^*$. For each $r\in (0,1)$, denote the circle of radius $r$ by $S_r=\{x\in \D^*||x|=r\}$ and the punctured disk of radius $r$ by $\D^*_r=\{x\in \D^*|0<|x|\le r\}$. Recall that the holonomy of connection $A$ along $S_r$ is a conjugacy class in $G$, which we denote by $\Hol(A,S_r)$. More precisely, for any $x=(r,0)\in \D^*$ and $y\in \P_x$, if we parallel transport $y$ along $S_r$, then we will end up with another point $y'\in \P_x$ such that $y'=gy$ for some $g\in G$. Then the holonomy is $\Hol(A,S_r)=[g]$, where $[g]$ denotes the conjugacy class of $g$.

Now we are in position to state our main theorem. Note that given a small constant $\ep>0$ and a YMH field $(A,u)$ on $\D^*$ with finite energy, we can always find some $0<r_0<1$ such that the energy of $(A,u)$ on $\D_{r_0}^*$ is smaller than $\ep$.
	
\begin{theorem}\label{TH0}
There exist constants $\ep>0$, and $C_k>0$ only depending on $k$, such that if $(A,u)\in \A\times \S$ is a smooth YMH field on $\D_{r_0}^*$ with isolated singularity at the origin and $E(A,u)\le\ep^2$, then the following hold.
\begin{enumerate}
\item The limit holonomy of $A$ at the origin exists, namely, there exists a constant $\al\in \g$ such that
\[\Hol(A):=\lim_{r\to 0}\Hol(A,S_r)=[\exp{(-2\pi\al)}].\]
\item There exists a gauge such that $A(r,\th)=ad\th$ on $\D_{r_0}^*$, where $a\in C^\infty(\D_{r_0}^*, \g)$ and for any integer $k\ge 0$,
\begin{equation}\label{e:main-thm-a}
\sup_{S_r}r^k|\n^k_A(a-\al)|\le C_k r^2, \quad \forall r\in (0,r_0/2).
\end{equation}
\item There is a constant $\de_\al=\sqrt{C(A)} \in (0,\frac{1}{2}]\cup \{1\}$ such that for any integer $k\ge 1$
\begin{equation}\label{e:main-thm-u}
\sup_{S_r}r^k|\n_A^{k} u|\le C_k E(A,u)^{\frac12}\(\frac{r}{r_0}\)^{\de_\al}, \quad \forall r\in (0,r_0/2).
\end{equation}
Here the constant $C(A)$ is the Poincar\'e constant explicitly given by (\ref{PC}) below, which is uniquely determined by the limit holonomy $\Hol(A)$.
\end{enumerate}
\end{theorem}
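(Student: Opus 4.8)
The plan is to reduce the system to a coupled pair of scalar/ODE-type inequalities in the radial variable by working in the temporal-like gauge $A=a\,d\theta$, and then to run a Simon--Łojasiewicz / ODE comparison argument on the angular energies. First I would establish existence of the limit holonomy: using the small-energy hypothesis and a Coulomb-type gauge fixing on dyadic annuli $\{2^{-j-1}\le|x|\le 2^{-j}\}$, one shows that the connection can be put in the radial form $A(r,\theta)=a(r,\theta)\,d\theta$ and that the angular part of the curvature controls $\partial_r a$, so that $a(r,\cdot)$ converges in $L^2(S^1)$, after a further gauge transformation, to a constant $\alpha\in\g$; the conjugacy class $[\exp(-2\pi\alpha)]$ is then the limit holonomy by the standard path-ordered exponential computation along $S_r$. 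This is essentially the two-dimensional analogue of Sibner--Sibner and Råde, and I expect it to go through without surprises once the gauge is chosen.

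The heart of the argument is part (3), the sharp decay of $u$. Write $e(r)=\int_{S_r}|\n_A u|^2$ for the angular energy and let $\mathcal E(r)=\int_{\D^*_r}|\n_A u|^2+|F_A|^2$ be the total energy on the small disk. Using the first equation $\n_A^*\n_A u=0$ in polar coordinates, one derives a differential inequality of the form $\mathcal E(r)\le C\,r\,\mathcal E'(r)$ up to error terms coming from $F_A$ and from the deviation $a-\alpha$; the crucial point is that the relevant Poincaré constant on the circle is governed by the linearized operator $-(\partial_\theta+\operatorname{ad}_\alpha)^2$ acting on sections twisted by the limit connection $\alpha\,d\theta$, whose smallest positive eigenvalue is exactly the $\delta_\alpha^2=C(A)$ appearing in \eqref{PC}. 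Because $\alpha\in\g$ with bi-invariant metric, that operator decomposes over the eigenspaces of $\operatorname{ad}_\alpha$, the eigenvalues are distances from the spectrum of $\operatorname{ad}_\alpha/i$ to $\Integer$, and one reads off that the first positive eigenvalue lies in $(0,\tfrac14]\cup\{1\}$ — giving $\delta_\alpha\in(0,\tfrac12]\cup\{1\}$. Integrating the inequality $\mathcal E(r)\le C r\,\mathcal E'(r)$ yields $\mathcal E(r)\le \mathcal E(r_0)(r/r_0)^{2\delta_\alpha}$, and then elliptic estimates on unit annuli (rescaled) upgrade this $L^2$ decay to the pointwise bound $\sup_{S_r}|\n_A u|\le C E(A,u)^{1/2}(r/r_0)^{\delta_\alpha}$, with the higher derivatives $\n_A^k u$ handled by bootstrapping the same rescaled elliptic estimates, which is the source of the factor $r^k$ and the constants $C_k$.

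For part (2), once $u$ is known to decay like $r^{\delta_\alpha}$ with $\delta_\alpha>0$, the term $u^*\n_A u$ in the second equation of \eqref{eq:EL1} is $O(r^{2\delta_\alpha-1})$, hence in particular better than $O(r^{-1})$, so the Yang-Mills-type equation $D_A^*F_A=-u^*\n_A u$ for $a$ becomes, after subtracting the flat model $\alpha\,d\theta$, a scalar Poisson-type equation for $a-\alpha$ with right-hand side that is $o(1)$ and, crucially, orthogonal to the obstruction (there is no holonomy obstruction left since we already centered at $\alpha$). A direct ODE/Fourier analysis on circles then gives the quadratic decay $|a-\alpha|\le C r^2$; the exponent $2$ rather than $2\delta_\alpha$ comes from the fact that the zero mode of the Laplacian on $S^1$ is what controls $a-\alpha$ and the forcing term, being a full gradient square, contributes at order $r^2$ to that mode. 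Again higher derivatives follow by rescaled elliptic estimates.

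The main obstacle I anticipate is making the differential inequality $\mathcal E(r)\le C r\,\mathcal E'(r)$ genuinely sharp — i.e. with the correct constant $\delta_\alpha$ and not merely some $\delta>0$. This requires a careful Poincaré inequality on $S^1$ for sections of the twisted bundle that is uniform as $r\to0$, which in turn forces one to control the coupling between the $a$-equation and the $u$-equation well enough that the error from $a-\alpha$ does not degrade the eigenvalue; the slightly delicate bookkeeping is that the $r^2$-decay of $a-\alpha$ in part (2) is used to close part (3), while part (3) is used to prove part (2), so the two must be run together in a single induction on dyadic scales (or via a continuity/bootstrap argument), rather than sequentially. I also expect the borderline case $\delta_\alpha=1$ (which occurs precisely when the limit holonomy is trivial, giving the removable-singularity statement) to need separate attention, since there the first positive eigenvalue jumps and the naive inequality is not strict.
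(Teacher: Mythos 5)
Your overall architecture is reasonable, and parts (1) and (2) are essentially on track: the existence of the limit holonomy follows from $F_A\in L^p$, $p>1$ (the paper quotes this as Theorem~\ref{TH6}), and the $r^2$ rate for $a-\al$ does come from the second equation once $u$ has \emph{some} positive decay rate --- in the paper this is read off on the cylinder from $*\n_A(e^{2t}\p_t a)=u^*\n_A u$, which forces $e^{2t}\p_t a$ to converge to a limit $\al_1$ that is generically nonzero (hence $r^2$ cannot be improved to $r^{2+\de_\al}$). Note also that the circularity you worry about between (2) and (3) is not actually present: the crude input $\lVert F_A\rVert_{L^2(P_t)}\le C\ep e^{-t}$ on unit cylinder pieces comes for free from the conformal weight $e^{-2t}$ in \eqref{eq:EL7}, with no decay of $u$ needed, so the paper runs the bootstrap sequentially (preliminary decay of $u$ $\Rightarrow$ improved decay of $F_A$ $\Rightarrow$ sharp decay of $u$) rather than as a coupled dyadic induction.

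The genuine gap is precisely at the point you flag but do not resolve: obtaining the exponent $\de_\al=\sqrt{C(A)}$ rather than some strictly smaller $\de$. Your first-order inequality $\mathcal E(r)\le Cr\,\mathcal E'(r)$ is obtained by pairing the equation with $u$ minus its projection onto $\ker\p_{\th,\al}$ and invoking the twisted Poincar\'e inequality; but the nonlinear term $\Ga(u)(\n_Au,\n_Au)$ and the deviation $a-\al$ contribute errors proportional to $\sup|\n_Au|^2$, so what one actually derives is $\mathcal E\le \frac{r}{2\de}\mathcal E'$ with $\de^2=C(A)-C\ep^2$, and integrating a \emph{first-order} inequality offers no mechanism to recover the lost $C\ep^2$ in the exponent. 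The paper's resolution is R{\aa}de's device: work with the angular energy $\Th(t)=\int_{\C_t}|\p_{\th,\al}u|^2$ and its square root $\ga=\sqrt{\Th}$, which satisfies the \emph{second-order} inequality $\ga''\ge(C(\al)-C\sup|\n_Au|^2)\ga-C\ep e^{-\ka t}$ of \eqref{ODI}. A first comparison argument yields the non-sharp rate $e^{-\de t}$; this is then substituted back into the same inequality, where the error $C\sup|\n_Au|^2\,\ga\le C\ep^2e^{-2\de(t-T_1)}\ga$ now decays in $t$ and can be absorbed into a lower-order term of a second comparison function, producing the sharp rate $e^{-\sqrt{C(\al)}\,t}$; the Pohozaev quantity $H(t)$ then converts angular decay into full energy decay. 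Without this second-order structure and the two-stage iteration, your scheme proves (3) only with a degraded exponent, and in particular the removable-singularity case $\de_\al=1$ would not follow.
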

	
\begin{rem}
\begin{enumerate}
\item The existence of limit holonomy $\Hol(A)$ can be guaranteed under weaker assumptions. In fact, it suffices to assume $F_A\in L^p(\D^*)$ for some $p>1$, see Section~\ref{ss: holonomy} below.
\item Actually, we obtain a more refined decay estimate in terms of the curvature $F_A$, see Theorem~\ref{MTH} below.
\item Obviously, Theorem~\ref{TH0} also holds for the symplectic vortex in the symplectic setting, which is just a special class of YMH fields.
\item In Section~\ref{s:optimal-example}, we provide a simple example which demonstrates that the decay estimates in Theorem~\ref{TH0} are optimal.
\end{enumerate}
\end{rem}

An easy corollary of Theorem \ref{TH0} is that the singularity is removable if the limit holonomy is identity.
\begin{corollary}
Suppose $(A,u)\in \A\times \S$ is a YMH field with finite energy, which satisfies equation \eqref{eq:EL1} over $\D^*$. If the limit holonomy $\Hol(A)=id$, then $(A,u)$ can be extend across the origin to a smooth YMH field on
the disk $\D$.
\end{corollary}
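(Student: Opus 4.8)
The plan is to feed the sharp decay estimates of Theorem~\ref{TH0} into the classical removability machinery for point singularities in dimension two; since a point has vanishing $2$-capacity, all the analytic difficulty is already packaged in Theorem~\ref{TH0}, and what remains is essentially gauge bookkeeping and a routine bootstrap. Because $\Hol(A)=\mathrm{id}$, the parameter $\al$ of Theorem~\ref{TH0} satisfies $\exp(-2\pi\al)=\mathrm{id}$; and since the Poincar\'e constant $C(A)$ depends only on the conjugacy class $\Hol(A)$ and equals $1$ for the trivial class, we have $\de_\al=1$. The first step is to normalize $\al$ to zero. I would apply the gauge transformation $g(\th)=\exp(\th\al)$, which is smooth and single-valued on $\D_{r_0}^*$ because $\exp(2\pi\al)=\mathrm{id}$; it need not extend over the origin, but that merely re-trivializes $\P$ near the puncture, which is exactly what the conclusion permits. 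Using that $\al$ commutes with $\exp(\th\al)$, one checks that in the new trivialization $A=a'\,d\th$ with $a'=\mathrm{Ad}_{\exp(\th\al)}(a-\al)\to 0$, while the gauge-invariant quantities $|\n_A^k(a-\al)|$ and $|\n_A^k u|$ are unchanged. Hence we may assume $a\to 0$, and \eqref{e:main-thm-a}--\eqref{e:main-thm-u} with $\de_\al=1$ give $\sup_{S_r}|\n_A^k a|\le C_k r^{2-k}$ and $\sup_{S_r}|\n_A^k u|\le C_k E(A,u)^{1/2}r^{1-k}r_0^{-1}$ for $k\ge 1$.

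Next I would convert these into honest $W^{1,\infty}$ bounds on a disk. Embedding $M$ isometrically in some $\Real^N$ makes $u$ bounded, and writing $\n u=\n_A u+X_A(u)$, where $X_A(u)$ is the vector field on $M$ generated by $A$ via the $G$-action and hence $|X_A(u)|\le C|A|$ as $M$ is compact, together with $|d\th|=r^{-1}$ and $|\n d\th|=O(r^{-2})$, the estimates above yield $|A|\le C|x|$, $|\n A|\le C$ and $|\n u|\le C$ on $\D_{r_0}^*$. Therefore $A$ and $u$ extend to $W^{1,\infty}(\D_{r_0})$ with $A(0)=0$, and $(A,u)$ solves \eqref{eq:EL1} classically away from the origin. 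To see the singularity is a removable weak singularity, I would test \eqref{eq:EL1} against $\eta_\ep\varphi$, where $\varphi$ is any test field on $\D_{r_0}$ and $\eta_\ep$ are logarithmic cutoffs vanishing near $0$ with $\|\n\eta_\ep\|_{L^2}\to 0$: the terms in which the derivative lands on $\eta_\ep$ vanish in the limit by Cauchy--Schwarz because $\n_A u$ and $F_A$ lie in $L^2(\D_{r_0})$, and the remaining terms converge by dominated convergence. Thus $(A,u)$ is a $W^{1,\infty}$ weak solution of \eqref{eq:EL1} on the full disk $\D_{r_0}$.

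The last step is an elliptic bootstrap. On a sufficiently small disk $\D_\rho$ the curvature energy is below Uhlenbeck's gauge-fixing threshold, so $A$ can be put in Coulomb gauge $d^*A=0$ with $A\in W^{1,2}$; in that gauge \eqref{eq:EL1} is a semilinear elliptic system for the pair $(A,u)$ with Lipschitz inhomogeneity, and iterating the standard $L^p$ and Schauder estimates promotes $(A,u)$ to $C^\infty(\D_\rho)$. Patching this smooth Coulomb gauge with the original smooth trivialization on $\D_\rho^*$ then produces a smooth YMH field on $\D$ restricting to $(A,u)$ on $\D^*$, which is the assertion; the smallness hypothesis needed to invoke Theorem~\ref{TH0} is available on some $\D_{r_0}^*$ by the observation preceding that theorem. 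I expect the only genuinely delicate point to be the gauge bookkeeping: the decay in Theorem~\ref{TH0} lives in the singular gauge $A=a\,d\th$, so one must be careful both that the normalization $\al\mapsto 0$ via a non-extendable gauge transformation is legitimate and that a second, Coulomb, gauge has to be introduced before the elliptic regularity theory can be applied.
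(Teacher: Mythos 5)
Your proposal is correct and follows essentially the same route as the paper: normalize $\al=0$ so that $C(A)=1$ and $\de_\al=1$, use parts (2) and (3) of Theorem~\ref{TH0} to extend $A$ (with $A(0)=0$) and $u$ continuously across the origin, and then invoke standard elliptic regularity in a Coulomb gauge to upgrade to smoothness. The paper compresses your last two steps (weak-solution removability and the bootstrap) into a citation of~\cite{Song1}, and absorbs your explicit normalizing gauge $\exp(\th\al)$ into the remark of Section~3.2 that $\al$ may be taken in normal form with $a_i\in[0,1)$; otherwise the arguments coincide.
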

\begin{proof}
Since $\Hol(A)=id$, we may set the constant $\al=0$. It follows from the definition (\ref{PC}) that the Poincar\'e constant is $C(A)=1$.

Now by (2) of Theorem~\ref{TH0}, there is a gauge transformation such that $A=ad\th$ and $\lim_{r\to0}a=0$. Then we can extend the connection $A$ to the whole disk $\D$ simply by letting $A(0)=0$ at the origin.
Also, since $\de_\al=1$, (3) of Theorem~\ref{TH0} immediately shows that $\n_Au$ is bounded. In particular, $u$ can be extended across the origin.

Consequently, $(A,u)$ is a continuous YMH field on the whole disk $\D$ with finite energy. The smoothness of $(A,u)$ then follows from standard elliptic theory, see for example~\cite{Song1}.
\end{proof}

We also get an analogous result for twisted harmonic maps with isolated singularities, which serves as an easy version of Theorem~\ref{TH0}.

Recall that, given a flat connection $A$ (but not necessarily trivial, due to non-vanishing holonomy) on the punctured disk $\D^*$, a map $u:\D^*\to M$ is called a \emph{twisted harmonic map} w.r.t. $A$ if it satisfies the
equation
\begin{equation}\label{e:twisted-harmonic-map}
  \tau_A(u)=\n^*_A\n_Au=0.
\end{equation}
Obviously, twisted harmonic maps are critical points of the energy functional $E_A(u)=\int|\n_Au|^2dv$, which is a natural generalization of the familiar harmonic map in a gauged setting. The twisted harmonic map naturally
emerges as a new type of bubbles during the blow-up process of a sequence of two dimensional YMH fields on degenerating Riemann surfaces. In fact, if the energy concentration occurs at an annulus in the collar area of the
degenerating Riemann surface, then it give rise to the so-called ``connecting bubbles" after suitable rescaling. This type of bubbles turns out to be twisted harmonic maps instead of usual harmonic maps, again due to
possibly non-trivial holonomy along the shrinking geodesic. See~\cite{Song2} for more details.

\begin{thm}\label{t:twisted-harmonic-map}
There exist constants $\ep>0$, and $C_k>0$ depending only on $k\ge 0$, such that if $u:\D^*_{r_{0}}\to M$ is a twisted harmonic map w.r.t. a flat connection $A=\al d\th$ on $\D^*_{r_0}$ with energy $E_A(u)\le \ep^2$, then for any integer $k\ge 0$,
\begin{equation}\label{e:thm-twisted-u}
 \sup_{S_r}r^k|\n_A^k u|\le CE_A(u)^{\frac12}\(\frac{r}{r_0}\)^{\de_\al}, \quad \forall r\in (0,r_0/2),
\end{equation}
where the constant $\de_\al^2=C(A)$ is again the Poinc\'{a}re constant decided by $\Hol(A)$.
\end{thm}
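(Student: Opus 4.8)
The plan is to reduce the twisted harmonic map equation on the punctured disk to an ODE/inequality for the angular energy by separation of variables, and then run a Simon-type monotonicity/iteration argument driven by the spectral gap of the twisted Laplacian on the circle. First I would pass to the logarithmic cylindrical coordinate $t=-\log r\in (-\log r_0,\infty)$, $\theta\in\U^1$, under which the flat metric is conformal to the cylinder metric $dt^2+d\theta^2$; since the energy $E_A(u)$ is conformally invariant in dimension two, the equation \eqref{e:twisted-harmonic-map} becomes $\partial_t^2 u+\n_{A,\theta}^2u=0$ (plus harmless lower-order geometric terms from the target), i.e. $u$ is an $A$-twisted harmonic map from the half-cylinder. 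The small-energy hypothesis $E_A(u)\le\ep^2$ combined with $\ep$-regularity (the interior estimate coming from the same elliptic theory used in \cite{Song1}, applied on unit sub-cylinders) gives a uniform $C^1$ bound and, after composing with normal coordinates on $M$ near the (essentially constant) image, lets me treat $u$ as a vector-valued function solving a small perturbation of the linear twisted Laplace equation.

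The key step is the spectral estimate on the circle. For the flat connection $A=\al\,d\theta$ with $\al\in\g$, the operator $-\n_{A,\theta}^2$ acting on $\g$- (or $T_uM$-) valued functions on $\U^1$ has eigenvalues determined by the limit holonomy $\exp(-2\pi\al)$: decomposing the relevant representation into weight spaces, the eigenvalues are $(n+\beta_j)^2$ where $e^{2\pi i\beta_j}$ runs over the eigenvalues of the holonomy and $n\in\Integer$. The smallest nonzero such eigenvalue is exactly the Poincaré constant $C(A)=\de_\al^2$; this is where the dichotomy $\de_\al\in(0,\tfrac12]\cup\{1\}$ comes from (if the holonomy is trivial the first eigenvalue is $1$, otherwise it is $\operatorname{dist}(0,\Integer+\{\beta_j\})^2\le\tfrac14$). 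Writing $f(t)=\tfrac12\int_{S}|\n_{A,\theta}u|^2\,d\theta$ for the angular energy, I would derive the differential inequality $f''\ge 2\de_\al^2 f - (\text{error})$, where the error is quadratic in $u$ and hence controlled by $\ep\cdot f$; absorbing it (after shrinking $\ep$) yields $f''\ge \de_\al^2 f$ on the half-cylinder. Since $f$ is integrable (finite energy) it cannot grow, so the standard ODE comparison forces $f(t)\le f(T)e^{-\de_\al(t-T)}$ for $t\ge T$, i.e. exponential decay of the angular energy at rate $\de_\al$.

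From decay of $f$ I would recover the pointwise estimate \eqref{e:thm-twisted-u} for $k=0,1$ by the usual chain: integrate $|\partial_t u|\lesssim \sqrt{f}$ in $t$ to control oscillation, then feed the exponential decay of $\int_{[t-1,t+1]\times\U^1}|\n_A u|^2$ into the $\ep$-regularity estimate on the unit sub-cylinder centered at $t$ to upgrade to a sup bound $\sup_{S_r}|\n_Au|\lesssim E_A(u)^{1/2}(r/r_0)^{\de_\al}$; translating $t=-\log r$ back gives the stated power $r^{\de_\al}$, and the constant in front is homogeneous of degree one in $E_A(u)^{1/2}$ because the linearization is. The higher-order bounds $r^k|\n_A^k u|\lesssim E_A(u)^{1/2}(r/r_0)^{\de_\al}$ then follow by bootstrapping: on each unit sub-cylinder apply interior elliptic (Schauder) estimates for the twisted harmonic map system to bound $\|\n_A^k u\|$ by lower-order norms already known to decay at rate $e^{-\de_\al t}$, and the scaling factor $r^k$ appears from rewriting cylinder derivatives as disk derivatives. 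I expect the main obstacle to be making the differential inequality $f''\ge\de_\al^2 f$ genuinely rigorous: one has to justify dropping/absorbing the curvature-of-target and nonlinear terms uniformly (this needs the $C^1$ smallness from $\ep$-regularity, hence a careful choice of $\ep$), handle the fact that $u$ need not converge to a single point a priori — though finite energy plus the decay of $f$ forces it to — and make sure no boundary term at $t=-\log r_0$ spoils the comparison (handled by choosing the comparison function to dominate on the finite part). Once $f''\ge\de_\al^2f$ is in hand, everything downstream is standard.
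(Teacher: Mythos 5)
Your overall scheme --- conformal change to the half-cylinder, the spectral gap $C(A)$ of $-\p_{\th,\al}^2$ on $\U^1$, a second-order differential inequality for the angular energy, ODE comparison, then $\ep$-regularity and bootstrapping for higher derivatives --- is the same as the paper's. But there is a genuine gap at the central step, and it is precisely the point the paper singles out as one of its two key technical ingredients. You derive the differential inequality for the angular energy $f(t)=\tfrac12\int_{\U^1}|\p_{\th,\al}u|^2$ itself. Differentiating twice, substituting the equation and applying the Poincar\'e inequality $\int|\p_{\th,\al}^2u|^2\ge C(A)\int|\p_{\th,\al}u|^2$ gives (after discarding the nonnegative term $\int|\p_t\p_{\th,\al}u|^2$ and absorbing the nonlinear error) at best $f''\ge 2\de_\al^2 f$, hence $f(t)\le Ce^{-\sqrt{2}\,\de_\al t}$; with your stated inequality $f''\ge\de_\al^2 f$ you only get $f\le Ce^{-\de_\al t}$. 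Since the quantity that $\ep$-regularity controls pointwise is $|\n_Au|\sim\sqrt{f}$, this yields $\sup_{S_r}|\n_Au|\lesssim r^{\de_\al/\sqrt2}$ (respectively $r^{\de_\al/2}$), \emph{not} the claimed sharp rate $r^{\de_\al}$. To reach exponent $\de_\al$ one needs $f\lesssim e^{-2\de_\al t}$, and the way to get it is R{\aa}de's observation: run the comparison for $\ga=\sqrt{f}$ rather than $f$. Indeed $f''=2(\ga')^2+2\ga\ga''$ while Cauchy--Schwarz gives $(\ga')^2\le\int|\p_t\p_{\th,\al}u|^2$, so the term $2\int|\p_t\p_{\th,\al}u|^2$ in $f''$ --- which you threw away --- exactly compensates $2(\ga')^2$ and one obtains $\ga''\ge C(A)\,\ga-(\text{error})$, whence $\ga\le Ce^{-\de_\al t}$ and the sharp exponent follows. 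Without this step your argument proves a correct but strictly weaker decay.

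Two further points. First, your passage from decay of the angular energy to decay of the full energy on $P_t=[t-1,t+1]\times\U^1$ (needed to invoke $\ep$-regularity) silently assumes that $\int|\p_tu|^2$ is controlled by $\int|\p_{\th,\al}u|^2$; the step ``integrate $|\p_tu|\lesssim\sqrt{f}$'' is not justified as written. The paper supplies this via the Pohozaev-type quantity $H(t)=\tfrac12\int(|\p_tu|^2-|\p_{\th,\al}u|^2)$, which satisfies $H'=0$ for a twisted harmonic map and tends to $0$ at infinity, hence vanishes identically, so the radial and angular energies agree on each circle. Second, even with the square-root trick, absorbing the quadratic error $C\ep^2\ga$ into $C(A)\ga$ only produces the non-sharp rate $\de=\sqrt{C(A)-C\ep^2}$; the exact exponent $\de_\al=\sqrt{C(A)}$ requires one more iteration, feeding the preliminary bound $|\n_Au|\lesssim e^{-\de(t-T_0)}$ back into the inequality so that the error term decays like $e^{-2\de t}$, strictly faster than $e^{-\sqrt{C(A)}t}$, after which a second comparison function gives the stated rate. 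Your single absorption step has no mechanism to recover this sharpness.
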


\begin{rem}
Obviously, when the connection $A$ is trivial, we recover the classical removable singularity theorem for two dimensional harmonic maps.
\end{rem}

\

For better illustration of the main ideas in our proof, we will first prove Theorem~\ref{t:twisted-harmonic-map} in Section \ref{s: twisted harmonic
map} and then Theorem~\ref{TH0} in Section \ref{s:Decay-cur-section}.

Theorem~\ref{t:twisted-harmonic-map} is proved in three steps. First we conformally change the punctured disk $\D^*$ to an infinitely long half cylinder $\C=[0,+\infty)\times \U^1$ and rewrite equation
(\ref{e:twisted-harmonic-map}) in an extrinsic form. Next we derive a second order differential inequality of the angular energy of $u$, i.e.
\[ \Th(t):=\int_{\{t\}\times\U^1}|\p_{\th,\al}u|^2d\th, t\in [0,+\infty),\]
where $\p_{\th,\al}=\p_\th+\al$ is the operator induced by $\al$. Then we deduce an exponential decay estimate of $\Th(t)$ by simple comparison principals. Finally we obtain the exponential decay estimate of the radial energy of $u$ by using the Pohozaev identity, which, together with the $\ep$-regularity theorem, yields the desired estimate~\eqref{e:thm-twisted-u}.

The above strategy is quite standard in blow-up analysis of two dimensional harmonic maps and perhaps is well-known to experts. However, there are two technical issues we need to address in order to obtain \emph{sharp}
estimates. The first one is that we need to explicitly determine the Poincar\'e constant $C(A)$. The second one is R{\aa}de's observation~\cite{Rade1} that instead of deriving an equation for the angular energy
$\Th(t)$ in Step 2, we should consider its square root $\ga(t):=\sqrt{\Th(t)}$.

Theorem~\ref{TH0} is proved in a similar but more involved manner, since we also need to estimate the connection $A$, which is no longer flat. This is accomplished by using a bootstrapping technique for the coupled
system~\eqref{eq:EL1}. Namely, we start with a preliminary estimate of $u$ given by the $\ep$-regularity theorem, then we derive an decay estimate of $A$, which in turn improve the estimate of $u$. Note that since $A$ is not flat, the Poincar\'e constant w.r.t. $A(r,\cdot)$ on each circle $S_r$ is in general not uniformly bounded as $r\to 0$. We overcome this technical issue by simply using the Poincar\'e inequality of the limit connection around the origin.

\

The rest of our paper is organized as follows. In Section 2, we recall the setting of YMH theory and some preliminary lemmas. In Section 3, we establish the generalized Poincar\'e inequality with connections on $\U^1$, where the best constant is explicitly determined. In Section 4, we prove Theorem~\ref{t:twisted-harmonic-map} for twisted harmonic maps with isolated singularities. In Section 5, we prove Theorem~\ref{TH0} for YMH fields with
isolated singularities. Finally in Section 6, we construct an explicit example with optimal decay rate, showing that our results are sharp.

\section{Preliminaries}

\subsection{Yang-Mills-Higgs functional and Euler-Lagrange equation}

Let $(M,h)$ be a compact Riemannian manifold, $G$ be a compact and connected  Lie group with Lie algebra $\g$.  Suppose $M$ supports an action of $G$, which preserves the metric $h$. Let $\D^{*}=\D\setminus\{0\}$ denote the
punctured disk in the two dimensional Euclidean space. Let $\P$ be a $G$-principal bundle over $\D^{*}$ and $\mathcal{F}=\P\times_{G}M$ be the associated bundle.

Let $\S:=\Ga(\mathcal{F})$ denote the space of smooth sections of $\mathcal{F}$ and $\A$ denote the space of smooth connections  which is an affine space modeled on $\Omega^{1}(\P\times_{Ad}\mathfrak{g})$. A connection $A\in \A$ naturally induces a covariant derivative $\n_A$ on $\F$ and an exterior derivative $D_A$ on $\P\times_{Ad}\mathfrak{g}$. The curvature of $A$ is defined by $F_{A}=D_A^2\in
\Omega^{2}(\P\times_{Ad}\mathfrak{g})$.

We define the Yang-Mills-Higgs(YMH) functional of a pair $(A,\phi)\in \A\times \S$ by
$$\mathcal{YMH}(A,\phi):=\int_{\D^{*}}| F_{A}|^{2}dv+\int_{\D^{*}} | \n_{A}\phi|^{2}dv.$$
Let $\mathcal{G}:=Aut(\P)=\P\times_{Ad}G$ be the gauge group of $\P$. Under a gauge transformation $s \in \mathcal{G}$, the connection $A$ and its curvature $F_A$ transform by the following law
$$s^{*}A=s^{-1}ds+s^{-1}As,\, s^{*}F_{A}=F_{s^{*}A}=s^{-1}F_{A}s.$$
Obviously, the Yang-Mills-Higgs functional is invariant under gauge transformations, that is
$$\mathcal{YMH}(A,\phi)=\mathcal{YMH}(s^{*}A,s^{*}\phi), \,\forall s\in \mathcal{G}.$$

The critical points of the YMH functional are called YMH fields which satisfy the Euler-Lagrangian equations
\begin{equation}\label{eq:EL2}
\begin{cases}
\n^{*}_{A}\n_{A}\phi=0,\\[1ex]
D^{*}_{A}F_{A}=-\phi^{*}\n_{A}\phi,
\end{cases}
\end{equation}
where $D_A^*$ and $\n_A^*$ are the dual operators of $D_A$ and $\n_A$ respectively, and the term $\phi^*\n_A\phi$ lies in the dual space of $\Om^1(\P\times_{ad}\g)$, namely, for all $B\in \Om^1(\P\times_{ad}\g)$, we have
\[ \<\phi^*\n_A\phi, B\>=\<\n_A\phi, B\phi\>.\]

Next we rewrite the Euler-Lagrangian equation more explicitly in a local trivialization. Since $G$ is connected and $\D^{*}$ is homeomorphic to $\U^{1}\times \Real^{1}$, the bundles $\P$ and $\F$ can be trivialized, i.e.
$\P=\D^*\times G$, $\F=\D^*\times M$. Thus any section $\phi \in \S$ can be identified with a smooth map $u:\D^{*}\longrightarrow M$, and any connection $A\in \A$ can be written as $A=A_{r}dr+A_{\theta}d\theta$, where
$A_{r}$ and $A_{\theta}$ are in $\mathcal{C}^{\infty}(\D^{*}, \mathfrak{g})$. Then the induced covariant derivative has the form $\n_{A}=\n+A$, such that
$$\n_{A}\phi:=\n u+A.u=du+A_{r}.udr+A_{\theta}.ud\theta,$$
where $.$ denote the infinitesimal action of $\g$ on $M$.

Thus the Euler-Lagrange equation \eqref{eq:EL2} is equivalent to
\begin{equation}\label{eq:EL3}
\begin{cases}
\n^{*}_{A}\n_{A}u=0,\\[1ex]
D^{*}_{A}F_{A}=-u^{*}\n_{A}u.
\end{cases}
\end{equation}

To better understand the equation \eqref{eq:EL3}, we need an explicit expression of the infinitesimal action of $\g$ on $M$. For $\forall a\in \g$, let $\varphi_{s}=\exp(s a): M\longrightarrow M$ be the $1$-parameter
group of isomorphism generated by $a$. Then $a$ induces a vector field $X_{a}\in \Ga(TM)$ by
$$a.u:=\frac{d}{ds}\Big|_{s=0}\varphi_s(y)=X_{a}(y).$$
Similarly, $a$ acts on a vector field $V\in \Ga(TM)$ by
$$a.V:= \frac{\nabla}{ds}\Big|_{s=0}(\varphi_{s})_{*}(V)=\nabla_{V} X_{a}.$$
where $\nabla$ is the Levi-Civita connection on $M$. Since the $G$-action preserves the metric $h$ on $M$,  $X_{a}$ is a Killing field. Thus  $\nabla X_{a}$ is skew-symmetric, i.e.
$$h(\nabla _{V}X_{a},W)=-h(\nabla _{W}X_{a},V),\quad V,\,W \in \Ga(TM).$$

Let $\tau(u)=\n^*\n u$ denote the tension field of map $u$. A direct calculation shows that equation \eqref{eq:EL3} is equivalent to
\begin{equation}\label{eq:EL4}
\begin{cases}
\tau(u)-d^{*}A.u+2A.du+A^{2}.u=0,\\[1ex]
d^{*}dA+[A,dA] +[A,[A,A]]=-u^{*}(\n_{A}u).
\end{cases}
\end{equation}

For the purpose of PDE analysis, we also need an extrinsic form of \eqref{eq:EL4}. First we recall the following equivarant embedding theorem by Moore and Schlafly~\cite{MS}.
\begin{theorem}\label{TH1}
Suppose $M$ is a compact Riemannian manifold and $G$ is a compact Lie group which acts on $M$ isometrically, then there exists an orthogonal representation $\rho:G\longrightarrow O(K)$ and an isometric embedding
$i:M\rightarrow\mathbb{R}^{K}$ such that $i(g.y)=\rho(g)\cdot i(y)$, for any $y\in M$ and $g\in G$.
\end{theorem}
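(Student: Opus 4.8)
The statement is the equivariant isometric embedding theorem of Moore and Schlafly, and the plan is to prove it in two stages: first produce a smooth $G$-equivariant embedding into a finite-dimensional orthogonal representation, then correct it to an isometric one. For the first stage I would start from any smooth embedding $f=(f_1,\dots,f_N):M\to\Real^N$ supplied by the Whitney embedding theorem, and let $G$ act on $C^\infty(M)$ by $(g\cdot\psi)(y)=\psi(g^{-1}\cdot y)$. Let $W\subset C^\infty(M)$ be the smallest $G$-invariant subspace containing all the $f_j$. Since $G$ is compact, the Peter--Weyl theorem shows that $W$ lies in a finite sum of isotypic components and is therefore finite dimensional; averaging any inner product over the Haar measure of $G$ endows $W$ with a $G$-invariant inner product. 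The evaluation map $\iota:M\to W^*$, $y\mapsto(\psi\mapsto\psi(y))$, is then $G$-equivariant for the contragredient representation on $W^*$, and since the $f_j$ already separate points and tangent vectors, $\iota$ is a smooth embedding. Choosing a $G$-orthonormal basis identifies $W^*$ with some $\Real^{K_0}$ carrying an orthogonal representation $\rho_0:G\to O(K_0)$, giving a smooth equivariant embedding $i_0:M\to\Real^{K_0}$.

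The pullback metric $h_0:=i_0^*\<\cdot,\cdot\>$ is $G$-invariant but in general not equal to $h$. Rescaling $i_0$ by a small positive constant, which preserves equivariance, I may assume that $h-h_0$ is positive definite, hence a genuine $G$-invariant Riemannian metric $h_1$ on $M$. It then suffices to construct a $G$-equivariant map $i_1:M\to\Real^{K_1}$ into a second orthogonal representation with $i_1^*\<\cdot,\cdot\>=h_1$: the direct sum $i_0\oplus i_1:M\to\Real^{K_0}\oplus\Real^{K_1}$ is then a $G$-equivariant isometric embedding realizing $h$, and the sum of two orthogonal representations is again orthogonal. Thus the whole problem reduces to equivariantly solving the underdetermined nonlinear system $\<\p_a i_1,\p_b i_1\>=(h_1)_{ab}$ for the single $G$-invariant target metric $h_1$.

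The substantial part, and the main obstacle, is to solve this system smoothly while respecting the symmetry, which I would do by running Nash's implicit function theorem (the Nash--Moser scheme) inside the $G$-equivariant category. Starting from a short equivariant map and measuring the metric defect $h_1-i_1^*\<\cdot,\cdot\>$, one corrects it iteratively using the linearized operator $L(i_1)v=(\<\p_a i_1,\p_b v\>+\<\p_a v,\p_b i_1\>)_{ab}$, Nash's algebraic right inverse built from a free (full osculating rank) embedding, and Friedrichs smoothing operators. The point that makes this equivariant is the compactness of $G$: because $h_1$ is $G$-invariant and the target representation is orthogonal, averaging any approximate solution over $G$ again solves the same metric equation, so one may work throughout in the $G$-fixed subspaces of the relevant tensor and section spaces, where the smoothing operators restrict and the right inverse can be averaged to be $G$-equivariant. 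Since the $G$-action preserves every Sobolev norm on the compact manifold $M$, all the tame estimates that drive the Nash--Moser convergence hold verbatim on these invariant subspaces, and the iteration produces a smooth $G$-equivariant $i_1$ with $i_1^*\<\cdot,\cdot\>=h_1$. The delicate point to verify, and the only place where genuine work is needed beyond the classical Nash argument, is exactly that equivariance is preserved at every stage of the hard implicit function theorem without destroying the loss-of-derivative estimates; the remaining ingredients are standard.
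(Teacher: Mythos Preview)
The paper does not actually prove this theorem: it is quoted verbatim as the result of Moore and Schlafly \cite{MS} and used as a black box to write the YMH equations in extrinsic form. So there is no ``paper's own proof'' to compare against.

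Your outline is, in broad strokes, the Moore--Schlafly argument itself. The first stage (Peter--Weyl plus evaluation into a finite-dimensional orthogonal representation to get a smooth equivariant embedding) is correct and standard; indeed this part goes back to Mostow and Palais. The reduction in the second paragraph is also right: scaling $i_0$ makes $h-i_0^*\langle\cdot,\cdot\rangle$ positive definite, and since $i_0$ is already injective with injective differential, $i_0\oplus i_1$ is an embedding regardless of whether $i_1$ is one, so you only need an equivariant \emph{map} realizing the residual metric $h_1$.

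The one place your sketch is thin is exactly where the content of Moore--Schlafly lies. You invoke ``Nash's algebraic right inverse built from a free embedding'' and then say you will average it to be $G$-equivariant, but you have not explained how to produce a \emph{free} (full second osculating rank) map that is already $G$-equivariant into an orthogonal representation, nor why averaging preserves freeness. In Moore--Schlafly this is handled by an explicit equivariant construction of a free map (using enough isotypic components so that the second-order jet map is surjective), not by averaging an arbitrary free map; naive averaging can destroy the freeness condition. Once a free equivariant map is in hand, your observation that the Nash--Moser iteration can be run entirely inside the $G$-fixed subspaces---because the metric defect, the linearized operator, the right inverse, and the smoothing operators are all $G$-equivariant when the data are---is correct and is the mechanism that makes the proof go through.
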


Since $G$ is connected, we can assume $\rho:G\longrightarrow SO(K)\subset O(K)$. Using this representation, the Lie algebra $\mathfrak{g}$ corresponds to a sub-algebra of $\mathfrak{so}(K)$, i.e. the space of skew-symmetric
$K\times K$ matrices. Thus for any $a\in \mathfrak{g}$ and $y\in M \hookrightarrow \mathbb{R}^{K}$, the infinitesimal action of $a$ on $y$ is simply
$$
a.y=X_{a}(y)=\chi_a\cdot y,
$$
where $\chi_a=\rho(a)\in \mathfrak{so}(K)$. It follows that the action of $a$ on a vector field $V\in \Ga(TM)$ is
$$
a.V=\nabla_{V}X_{a}=(\chi_a\cdot V)^{\top}=\chi_a\cdot V-\Ga(y)(X_{a}, V),
$$
where $\top$ denotes the projection from $\mathbb{R}^K$ to the tangent space and $\Ga$ denotes the second fundamental form.

Using these notations, we can rewrite the equation \eqref{eq:EL4} as
\begin{equation}\label{eq:EL5}
\begin{cases}
\Delta u-d^{*}A.u+2A.du+A^{2}.u=\Ga(u)(\n_{A}u,\n_{A}u),\\[1ex]
d^{*}dA+[A,dA] +[A,[A,A]]=-u^{*}(\n_{A}u).
\end{cases}
\end{equation}

\subsection{Coulomb gauge and epsilon regularity}

Next we recall the basic $\ep$-regularity theorem for YMH fields, starting with the following well-known Uhlenbeck's theorem on Coulomb gauge~\cite{Uhlenbeck}.
\begin{theorem}\label{TH2}
Let $p>1$  and $\P=\D\times G$ be a trivial $G$-principal bundle over $\D$, where $\D$ is the unit disk in $\mathbb{R}^{2}$, and $G$ is connected compact Lie group. Then there exists constants $\varepsilon_{Uh}$ and
$C_{Uh}$, such that for any connection $\tilde{A}\in \A$, if $\lVert F_{\tilde{A}}\rVert_{L^{p}(\D)}\leq \varepsilon_{Uh}$, then $\tilde{A}$ is gauge equivalent by a gauge transformation $s\in W^{2,p}(\D,G)$ to a connection
$A\in\A$ which satisfies
\begin{itemize}
\item[$(1)$] $d^{*}A=0;$
\item[$(2)$] $A(\nu)=0$, where $\nu$ is the outer  normal vector field of boundary $\partial \D$;
\item[$(3)$]  $\lVert A\rVert_{W^{1,p}(\D)}\leq C_{Uh} \lVert F_{A}\rVert_{L^{p}}.$
\end{itemize}	
\end{theorem}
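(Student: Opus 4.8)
The plan is to prove this by the method of continuity, with the $L^p$ Hodge (elliptic) estimate as the engine. Throughout I work in the fixed trivialization, so a connection is a $\g$-valued $1$-form $A$ with curvature $F_A = dA + A\wedge A$, where $A\wedge A$ abbreviates the quadratic bracket term. The two Coulomb conditions $d^*A=0$ in $\D$ and $A(\nu)=0$ on $\p\D$ are exactly the relative boundary conditions that make $(d,d^*)$ an elliptic boundary value problem on $1$-forms, so they are the natural target of the gauge fixing.

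The first ingredient is the \emph{a priori estimate}. For a $\g$-valued $1$-form satisfying $d^*A=0$ and $A(\nu)=0$, the $L^p$ Hodge/Gaffney estimate (Agmon--Douglis--Nirenberg for the elliptic system $(d,d^*)$, together with the vanishing of harmonic fields on the simply connected disk) yields $\norm{A}_{W^{1,p}}\le C_1\norm{dA}_{L^p}$. Writing $dA=F_A-A\wedge A$ and using the two-dimensional Sobolev embedding $W^{1,p}\hookrightarrow L^{2p}$ (valid for every $p>1$), one gets $\norm{A\wedge A}_{L^p}\le C_2\norm{A}_{W^{1,p}}^2$, hence $\norm{A}_{W^{1,p}}\le C_1\norm{F_A}_{L^p}+C_1C_2\norm{A}_{W^{1,p}}^2$. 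Whenever $\norm{A}_{W^{1,p}}$ lies below the threshold $\de_0:=(2C_1C_2)^{-1}$ the quadratic term is absorbed, giving the key inequality $\norm{A}_{W^{1,p}}\le 2C_1\norm{F_A}_{L^p}$. This is precisely the source of conclusion $(3)$, with $C_{Uh}=2C_1$.

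The second ingredient is \emph{local existence} via the implicit function theorem. Fixing a base Coulomb connection $A_0$ and writing a gauge as $s=\exp\xi$, the map $\xi\mapsto\big(d^*(s^*A'),\,(s^*A')(\nu)\big)$ has linearization $\xi\mapsto d^*d_{A_0}\xi$ at $\xi=0$, which is the Hodge Laplacian on $\g$-valued functions with a Neumann-type boundary condition, invertible modulo constants between the appropriate $W^{2,p}$ and $L^p$ spaces. Thus any connection that is $W^{1,p}$-close to a Coulomb connection again admits a Coulomb representative depending continuously on the data. Combined with the a priori estimate, this shows the set of connections admitting a Coulomb representative with $\norm{A}_{W^{1,p}}<\de_0$ is \emph{open} in $\{\,\norm{F}_{L^p}<\ep\,\}$; the strict bound $\norm{A}_{W^{1,p}}\le 2C_1\norm{F_A}_{L^p}<2C_1\ep<\de_0$ (for $\ep$ small) ensures the constraint is never saturated. \emph{Closedness} follows from weak compactness: a sequence of uniformly $W^{1,p}$-bounded Coulomb representatives, together with their $W^{2,p}$ gauge transformations, subconverges, the Coulomb conditions and the $W^{1,p}$ bound pass to the limit by weak lower semicontinuity, and the curvatures converge in $L^p$.

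The main obstacle is the \emph{globalization step}. The open--closed argument shows that the good set contains the trivial connection, so to conclude for an arbitrary $\tilde A$ one must connect it to $0$ through connections whose curvature stays below $\ep$, i.e.\ keeping the interpolating representatives inside the perturbative regime $\norm{\cdot}_{W^{1,p}}<\de_0$. This is delicate, and the naive rescaling $t\tilde A$ does \emph{not} work: since $F_{t\tilde A}=t\,dA+t^2A\wedge A$, the curvature can be large at intermediate $t$ even when $F_{\tilde A}$ is tiny (for instance when $\tilde A$ is a large pure gauge, whose curvature vanishes while $F_{t\tilde A}$ does not). Following Uhlenbeck~\cite{Uhlenbeck}, I would therefore run the continuity argument carefully, monitoring the curvature of the interpolating connections through the a priori estimate so that their $W^{1,p}$ norms never escape $\de_0$; this interplay between the gauge-invariant smallness of $F$ and the nonlinear absorption in the a priori estimate is the genuine technical core. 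Once the argument closes, the endpoint $\tilde A$ admits a Coulomb representative $A$ with $\norm{A}_{W^{1,p}}<\de_0$, the a priori estimate upgrades this to the sharp bound $(3)$, and standard elliptic regularity for the gauge equation yields $s\in W^{2,p}(\D,G)$.
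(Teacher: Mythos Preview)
The paper does not prove this theorem at all: it is stated as a background result and attributed directly to Uhlenbeck~\cite{Uhlenbeck}, with no argument given. So there is no ``paper's own proof'' to compare against.

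Your outline is the standard method-of-continuity proof from Uhlenbeck's original paper, and the structure is correct: the $L^p$ Hodge estimate plus quadratic absorption gives the a priori bound, the implicit function theorem gives openness, and weak compactness gives closedness. You have also correctly identified the genuine subtlety, namely that the naive path $t\tilde A$ can leave the small-curvature regime. One remark on the globalization: Uhlenbeck's actual device is to run the continuity in a scaling parameter (dilating the domain, equivalently shrinking the connection), so that at the initial scale the connection is trivially small in $W^{1,p}$ and one tracks the Coulomb representative as the scale returns to $1$; the a priori estimate keeps $\norm{A}_{W^{1,p}}$ strictly below the threshold along the whole path because $\norm{F_A}_{L^p}$ is gauge-invariant and controlled at every scale. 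Your sketch gestures at this but does not name the path; making it explicit would close the argument.
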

Under a fixed Coulomb gauge where $d^*A=0$, equation \eqref{eq:EL5} becomes a coupled elliptic system
\begin{equation}\label{eq:EL6}
 \begin{cases}
 \Delta u+2A.du+A^{2}.u=\Ga(u)(\n_{A}u,\n_{A}u),\\[1ex]
 \Delta A+ [A,dA] +[A,[A,A]]=-u^{*}(\n_{A}u).
 \end{cases}
\end{equation}
Then following Sacks-Uhlenbeck \cite{SU}, one can easily prove an $\ep$-regularity theorem for YMH fields. For a proof of the following theorem, we refer to Lemma $4.1$ and $4.2$ in \cite{Song1}.

\begin{theorem}\label{TH3}
Let $\D$ be the unit disk in $\mathbb{R}^{2}$ and $\D_{\frac{1}{2}}$ be the disk with radius $\frac{1}{2}$.
Suppose $(A,u)\in \A\times\S$ is  a smooth YMH field with finite energy and $\lVert F_{A}\rVert_{L^{2}}\le \ep_{Uh}$, then under the Coulomb gauge, we have
\begin{itemize}
\item[$(1)$]  For any $1<p<2$,
$$
\lVert A\rVert_{W^{2,p}(\D_{\frac{1}{2}})}\leq C_{p}(\lVert \n_{A}u\rVert_{L^{2}(\D)}+\lVert F_{A}\rVert_{L^{2}(\D)}),
$$
where $C_{p}$ is a constant only depending on $p$.
\item[$(2)$]  There exists a constant $\varepsilon_{0}$, such that if $\lVert \n_{A}u\rVert_{L^{2}(\D)}\leq \varepsilon_{0}$, then for any $1<p<\infty$,
$$
\lVert u-\bar{u}\rVert_{W^{2,p}(\D_{\frac{1}{2}})}+\lVert A\rVert_{W^{2,p}(\D_{\frac{1}{2}})}\leq C_{p}(\lVert \n_{A}u\rVert_{L^{2}(\D)}+\lVert F_{A}\rVert_{L^{2}(\D)}),
$$
where $\bar{u}$ is the mean value of $u$ on $\D$, and $C_{p}$ is a constant only depending on $p$ and $M$. In particular,
 $$
 \sup_{\D_{\frac{1}{2}}}| \n_{A}u|+ \sup_{\D_{\frac{1}{2}}}| F_{A}|\leq C(\lVert \n_{A}u\rVert_{L^{2}(\D)}+\lVert F_{A}\rVert_{L^{2}(\D)}).
 $$
\end{itemize}  	
\begin{rem}\label{r:reg}
By a bootstrap argument, the above theorem actually implies the following higher order estimate
$$
\sup_{\D_{\frac{1}{2}}}|\nabla^{k}_A(u-\bar{u})|+\sup_{\D_{\frac{1}{2}}}|\nabla^{k}_AF_A|\leq C_{k} (\lVert \n_{A}u\rVert_{L^{2}(\D)}+\lVert F_{A}\rVert_{L^{2}(\D)}).
$$
 \end{rem}
\end{theorem}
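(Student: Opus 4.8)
The plan is to carry out the whole analysis in the Coulomb gauge supplied by Theorem~\ref{TH2}: since $\norm{F_A}_{L^2}\le\ep_{Uh}$, we may assume $d^*A=0$ together with the stated boundary condition and $\norm{A}_{W^{1,2}(\D)}\le C_{Uh}\norm{F_A}_{L^2}$, so that the YMH field solves the coupled elliptic system \eqref{eq:EL6}. The two dimensional Sobolev embedding $W^{1,2}\hookrightarrow L^q$ then gives $\norm{A}_{L^q(\D)}\le C_q\norm{F_A}_{L^2}$ for every finite $q$, a fact I will use repeatedly. All estimates are interior: passing from $\D$ to $\D_{1/2}$ is done with a cutoff and the interior Calder\'on--Zygmund inequality for $\Delta$, and constants are allowed to depend on $p$, on $M$ (through a uniform bound for $\Ga$ and the operator norm of $\rho$), and on $G$.

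Part (1) is the easy half and needs no smallness of $\n_A u$. Writing the second equation of \eqref{eq:EL6} as $\Delta A=-[A,dA]-[A,[A,A]]-u^*\n_A u$, I estimate the right hand side in $L^p$ for a fixed $1<p<2$. The cubic term lies in every $L^p$ because $A\in L^q$ for all $q$; the bracket $[A,dA]$ is a product of the $L^q$ factor $A$ and the $L^2$ factor $dA$, hence lies in $L^p$ for each $p<2$; and since $M$ is compact and the action is linear in the embedding one has the pointwise bound $|u^*\n_A u|\le C|\n_A u|$, so this term lies in $L^2\subset L^p$. Thus $\norm{\Delta A}_{L^p(\D)}\le C_p(\norm{\n_A u}_{L^2}+\norm{F_A}_{L^2})$, and the interior elliptic estimate yields the claimed $W^{2,p}(\D_{1/2})$ bound on $A$.

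Part (2) is where the smallness hypothesis $\norm{\n_A u}_{L^2}\le\ep_0$ is essential, and it is also the main obstacle: the first equation of \eqref{eq:EL6} contains the critical quadratic term $\Ga(u)(\n_A u,\n_A u)$, which a priori lies only in $L^1$. My plan is the Sacks--Uhlenbeck absorption argument. Fix $1<p<2$. Since the field is smooth, $\norm{u-\bar u}_{W^{2,p}(\D_{3/4})}$ is finite, and the interior Calder\'on--Zygmund estimate gives $\norm{u-\bar u}_{W^{2,p}(\D_{1/2})}\le C(\norm{\Delta u}_{L^p(\D_{3/4})}+\norm{u-\bar u}_{L^2})$. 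The two lower-order terms $A.du$ and $A^2.u$ are harmless because $A\in L^q$ for all $q$, while the quadratic term is controlled by $\norm{|\nabla u|^2}_{L^p}=\norm{\nabla u}_{L^{2p}}^2$. The key point is the two dimensional Gagliardo--Nirenberg interpolation $\norm{\nabla u}_{L^{2p}}^2\le C\norm{\nabla u}_{L^2}\norm{\nabla^2 u}_{L^p}+C\norm{\nabla u}_{L^2}^2$, whose leading factor $\norm{\nabla u}_{L^2}\le C\ep_0$ lets me absorb $C\ep_0\norm{\nabla^2 u}_{L^p}$ into the left hand side once $\ep_0$ is small enough. This produces $\norm{u-\bar u}_{W^{2,p}(\D_{1/2})}\le C(\norm{\n_A u}_{L^2}+\norm{F_A}_{L^2})$ for every $1<p<2$, hence $\n_A u\in L^{q_0}$ for some $q_0>2$ by Sobolev embedding.

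Once $\n_A u\in L^{q_0}$ with $q_0>2$, the regularity propagates by a routine bootstrap: the quadratic term now lies in $L^{q_0/2}$ with $q_0/2>1$, Calder\'on--Zygmund upgrades $u-\bar u$ to $W^{2,q_0/2}$, Sobolev embedding feeds back a strictly larger integrability exponent for $\n_A u$, and finitely many iterations reach every $p<\infty$; feeding the improved integrability of $u^*\n_A u$ and $dA$ back into the $A$-equation likewise promotes the bound of Part (1) to all $1<p<\infty$. The supremum bound follows from $W^{2,p}\hookrightarrow C^1$ for $p>2$ in two dimensions. Finally, the higher-order estimate of Remark~\ref{r:reg} is obtained in the same spirit by differentiating \eqref{eq:EL6} and treating $\nabla^k u$ and $\nabla^k A$ as solutions of inhomogeneous elliptic equations whose inhomogeneities are already bounded in $L^p$ at the previous stage; the only bookkeeping point is that, once $A\in C^0$, the covariant derivatives $\n_A^k$ and the ordinary ones differ by terms that are already controlled, so the two families of norms are equivalent.
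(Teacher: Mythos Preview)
Your proposal is correct and follows precisely the Sacks--Uhlenbeck strategy the paper alludes to; note that the paper itself does not supply a proof here but refers to Lemmas~4.1--4.2 of \cite{Song1}, so what you have written is essentially the argument behind that citation. One small bookkeeping point worth making explicit in your absorption step: the smallness hypothesis is on $\norm{\n_A u}_{L^2}$, not on $\norm{\nabla u}_{L^2}$, so you should remark that $\norm{\nabla u}_{L^2}\le\norm{\n_A u}_{L^2}+C\norm{A}_{L^2}\le\ep_0+C\ep_{Uh}$, which is still small enough to close the loop.
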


\subsection{Balanced temporal gauge and equations on cylinder}\label{ss: eq-c}
An important feature of the YMH functional is that, for a conformal metric $g=e^{2v}g_{0}$, where $v$ is a smooth function on $\D^{*}$ and $g_{0}$ is the Euclidean metric, we have
\begin{equation}\label{eq:EL7}
 \mathcal{YMH}(A,u,g)=\int_{\D^{*}} e^{-2v}| F_{A}|^{2}_{g_{0}}dv_{g_{0}}+\int_{\D^{*}}| \n_{A}u|^{2}_{g_{0}}dv_{g_{0}}.
 \end{equation}

We will often perform the conformal transformation from $\D^{*}$ to $\C=(1,\infty)\times \U^{1}$ by
\begin{equation}\label{conf-to-c}
 \varphi: \D^{*}\longrightarrow \C=(1,\infty)\times \U^{1},\,\, (r,\theta)\mapsto (-\log r, \theta)=(t,\theta),
\end{equation}
where the cylinder $\C$ is equipped with the canonical flat metric $g_1=dt^{2}+d\theta^{2}$. Obviously, $g_{0}=e^{-2t}\phi^*g_1$. In view of the conformal property \eqref{eq:EL7}, the Euler-Lagrange equation \eqref{eq:EL3} has the following form w.r.t. the flat metric $g_1$ on cylinder $\C$
\begin{equation}\label{eq:EL8}
\begin{cases}
\n^{*}_{A}\n_{A}u=0,\\[1ex]
D^{*}_{A}(e^{2t}F_{A})=-u^{*}\n_{A}u,
\end{cases}
\end{equation}
where $\ast$ is the Hodge star operator induced by the metric $g$.

Generally, since the cylinder $\C$  is homotopic to $\U^{1}$, there dose not exist a global Coulomb gauge. This motivates us to choose the so-called \emph{balanced temporal gauge}, whose
existence is guaranteed by the following lemma, cf. Lemma $3.2$ in \cite{Song2}.

\begin{lemma}\label{LM1}
Suppose $A$ is a smooth connection on the principal $G$-bundle $\P$ over cylinder $\C$ and $t_{0}$ is a fixed number in $(1,\infty)$. Then there exists a balanced temporal gauge such that $A=a(t,\theta)d\theta$,
where $a:\C\to \mathfrak{g}$ is a smooth  map. Moreover, there exists a constant $\alpha \in \mathfrak{g}$ such that
$$a(t_{0},\theta)=\alpha,\quad \forall\,\theta\in \U^{1}.$$
\end{lemma}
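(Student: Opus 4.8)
The plan is to construct the balanced temporal gauge in two stages: first reduce $A$ to temporal form $A = a(t,\theta)\,d\theta$ by killing the $dt$-component, and then use the residual gauge freedom to arrange the normalization $a(t_0,\cdot)\equiv\alpha$ for a constant $\alpha\in\g$. For the first stage, write $A = A_t\,dt + A_\theta\,d\theta$ in the trivialization $\P=\C\times G$. I seek a gauge transformation $s:\C\to G$ solving the ODE (in the $t$-variable, with $\theta$ a parameter)
\begin{equation*}
\partial_t s = -A_t\, s, \qquad s(1,\theta) = \mathrm{id}.
\end{equation*}
Since $A_t$ is smooth and the $t$-slices $\{t\}\times\U^1$ are compact, this linear ODE has a unique smooth global solution $s\in C^\infty(\C,G)$ (one can view it as parallel transport along the $t$-lines). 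Under $s^*A = s^{-1}ds + s^{-1}As$, the $dt$-component becomes $s^{-1}\partial_t s + s^{-1}A_t s = -s^{-1}A_t s + s^{-1}A_t s = 0$, so $s^*A = a(t,\theta)\,d\theta$ with $a := s^{-1}A_\theta s + s^{-1}\partial_\theta s \in C^\infty(\C,\g)$. Rename $s^*A$ as $A$.

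For the second stage, note that the remaining gauge transformations preserving the temporal form $A=a\,d\theta$ are exactly those $h:\C\to G$ with $\partial_t h = 0$, i.e.\ $h = h(\theta)$ depending only on $\theta$. Under such an $h$, the component transforms by $a \mapsto h^{-1}ah + h^{-1}\partial_\theta h$, which is precisely the gauge action of the loop group on connections over the circle $\{t_0\}\times\U^1$. Now $a(t_0,\cdot)\,d\theta$ is a smooth connection on the trivial $G$-bundle over $\U^1$; since $G$ is connected, its holonomy is some $g = \exp(-2\pi\alpha)$ for a suitable $\alpha\in\g$ (using surjectivity of $\exp$ for compact connected $G$), and the constant connection $\alpha\,d\theta$ on $\U^1$ has the same holonomy. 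Two connections on a bundle over $\U^1$ with the same holonomy are gauge equivalent, so there exists $h=h(\theta)\in C^\infty(\U^1,G)$ with $h^{-1}a(t_0,\cdot)h + h^{-1}\partial_\theta h = \alpha$. Extend $h$ trivially in $t$ to a gauge transformation on all of $\C$; applying it keeps the temporal form while achieving $a(t_0,\theta)=\alpha$ for all $\theta$, as required. Composing the two gauge transformations proves the lemma.

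The one point requiring genuine care — the ``main obstacle'' such as it is — is the explicit construction of $h(\theta)$ conjugating the circle connection $a(t_0,\cdot)\,d\theta$ to the constant $\alpha\,d\theta$, together with checking that $h$ is globally well-defined on $\U^1$ (i.e.\ periodic) and smooth. Concretely, let $P(\theta)$ be the solution of $\partial_\theta P = -a(t_0,\theta)P$ with $P(0)=\mathrm{id}$; then the holonomy around $\U^1$ is $P(2\pi)^{-1}$, which we call $\exp(-2\pi\alpha)$. One then sets $h(\theta) := P(\theta)\exp(\theta\alpha)$ and verifies that $h^{-1}a(t_0,\cdot)h + h^{-1}\partial_\theta h = \alpha$ by direct computation, and that $h(2\pi) = P(2\pi)\exp(2\pi\alpha) = \exp(-2\pi\alpha)^{-1}\exp(2\pi\alpha)$... wait, one must instead pick $h(\theta) = P(\theta)\exp(-\theta\alpha)$ with $\alpha$ normalized so that $P(2\pi) = \exp(-2\pi\alpha)$, giving $h(2\pi)=\exp(-2\pi\alpha)\exp(-2\pi\alpha)^{-1}$ — the precise sign bookkeeping matching the holonomy convention $\Hol(A)=[\exp(-2\pi\alpha)]$ is exactly what needs to be pinned down, but it is routine once the conventions are fixed. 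Smoothness and periodicity of $h$ then follow from those of $P$ and the exponential. Everything else — existence/uniqueness of solutions to the linear ODEs, smoothness of the resulting gauge transformations, and the transformation law computations — is standard.
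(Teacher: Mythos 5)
Your construction is correct and is essentially the standard one behind the result the paper cites (Lemma 3.2 of \cite{Song2}): first pass to a temporal gauge by parallel transport in $t$, then use the residual $\theta$-dependent gauge freedom $h(\theta)=P(\theta)\exp(\theta\alpha)$, where $P$ is the $\theta$-parallel transport on the circle $\{t_0\}\times\U^1$ and $\alpha\in\g$ is chosen by surjectivity of $\exp$ so that $P(2\pi)=\exp(-2\pi\alpha)$. The only loose end is your sign bookkeeping at the close: with the paper's convention $\partial_\theta g+A_\theta g=0$, the holonomy representative is $P(2\pi)$ itself (not $P(2\pi)^{-1}$), so keeping $h=P\exp(\theta\alpha)$ with $P(2\pi)=\exp(-2\pi\alpha)$ gives $h^{-1}a h+h^{-1}\partial_\theta h=\alpha$ and $h(2\pi)=\mathrm{id}$, while full smooth periodicity of $h$ on $\U^1$ follows from the cocycle identity $P(\theta+2\pi)=P(\theta)P(2\pi)$ (note $P$ alone is not periodic, which is precisely why the factor $\exp(\theta\alpha)$ is needed).
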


Under such a balanced temporal gauge where $A=a d\theta$, the curvature is simply
$$F_{A}=\partial_{t}a\,dt\wedge d\theta.$$
Then equation \eqref{eq:EL8} becomes
\begin{equation}\label{eq:EL9}
\begin{cases}
 u_{tt}+u_{\theta \theta}-\Ga(u)(\n_{A}u,\n_{A}u)+\partial _{\theta}a.u+2a.u+a^{2}.u=0,\\
 -(2\partial_{t}a+\partial_{t}\partial_{t}a)d\th+(\partial_{\theta}\partial_{t}a+[a,\partial_{t}a])dt
 =-e^{-2t}u^{*}\n_{A}u.
 \end{cases}
 \end{equation}

If we denote the derivative in $\th$-direction by
$$\partial_{\theta,a}u:=\partial_{\theta}u+ a.u,$$
then $\na u=u_tdt+\pa ud\th$ and the first equation of \eqref{eq:EL9} has the following form
\begin{equation}\label{eq:EL10}
\Delta_a u:=u_{tt}+\partial_{\theta,a}^{2}u=\Ga(u)(\n_{A}u,\n_{A}u).
\end{equation}
Moreover, the second equation of \eqref{eq:EL9} can be rewritten as
\begin{equation}\label{e:a}
*\na (e^{2t}\p_ta)=u^*\na u.
\end{equation}

\section{Holonomy and Poincar\'{e} inequality}

In this section, we establish the key analytical tool in this paper, i.e. the Poincar\'e inequality with connection on $\U^1$. The inequality is already proved and played an important role in the blow-up analysis of a
sequence of YMH fields in \cite{Song2}, here we take a step further by exploring the best constant of the Poincar\'e inequality.

\subsection{Holonomy}\label{ss: holonomy}
First we recall the definition of holonomy and some basic facts, which will be useful in the study of Poincar\'e constant.

Let $A=A_{r}dr+A_{\theta}d\theta\in \A$ be a connection on a trivial bundle vector bundle $\D^{*}\times \Real^K$. Let $l_{\theta}:=\{(r,\theta)|0<r<1\}$ be the line of angle $\theta$ and $S_{r}=\{x\in \D^{*}||x|=r\}$ be the circle with radius $r>0$. An orthogonal normal frame $\{e_{i}(r,\theta)\}$ can be obtained by first fixing a frame along the line $l_{0}$  and then extending them by parallel transport around the circle
$S_{r}$ for every $0<r<1$. Suppose $e_{i}(r,2\pi)=g(r)e_{i}(r,0)$ for some $g(r)\in G$, then the holonomy on $S_{r}$ is
$$\Hol(A,r)=[g(r)],$$
where $[g(r)]$ denotes the conjugacy class of $g(r)$ in $G$.

More precisely, if we denote the restricted connection on $S_r$ by $\n_{\theta,A}=d_\th + A_\th d\th$, then we require $\n_{\theta,A}e_{i}(r,\th)=0$ for all $(r,\th)$. Thus by setting
$e_{i}(r,\theta)=g(r,\theta)e_{i}(r,0)$, we get a ordinary differential equation
\begin{equation}\label{eq:EL13}
\begin{cases}
\frac{\partial}{\partial_{\theta}} g(r,\theta)+A_{\theta} g(r,\theta)=0,\\[1ex]
g(r,0)=id.
\end{cases}
\end{equation}
There is unique solution $g(r,\theta)$ of \eqref{eq:EL13} and $\Hol(A,r)=[g(r,2\pi)]$. In particular, if $A=\alpha d\theta$ is a flat connection, where $\alpha\in \mathfrak{g}$ is a constant. Then
$g(r,\theta)=\exp(-\alpha\theta)$ and the holonomy is $\Hol(A)=[\exp(-2\pi\al)]$.

From the above construction, it is easy to see that $\Hol(A,r)$ is invariant under gauge transformation (cf. Lemma $3.1$ in \cite{Sibner4}). Moreover, we have (cf. \cite{Song2},\cite{Sibner4},\cite{Smith})
\begin{theorem}\label{TH6}
For $A\in \A$, if $\lVert F_{A}\rVert_{L^{p}(\D^{*})}\leq C$ for some $p>1$, then the limit holonomy \[ \Hol(A)=\lim_{r\to 0} \Hol(A,r)\]
exists.
\end{theorem}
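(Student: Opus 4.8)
The plan is to show that the conjugacy class $\Hol(A,r)$ is a Cauchy family as $r\to 0$, by controlling how much the holonomy can change when $r$ decreases, using the curvature bound $\|F_A\|_{L^p(\D^*)}\le C$ with $p>1$. First I would reduce to a convenient gauge: by the trivialization we already have $A=A_r\,dr+A_\theta\,d\theta$, and after a gauge transformation depending only on $r$ (integrating out $A_r$ along each ray $l_\theta$, or equivalently passing to a radial/temporal gauge) we may assume $A=A_\theta\,d\theta$, so that $F_A=\partial_r A_\theta\,dr\wedge d\theta$. In this gauge the holonomy solves the ODE \eqref{eq:EL13}, and I would denote by $g(r,\theta)$ the unique solution with $g(r,0)=\mathrm{id}$, so $\Hol(A,r)=[g(r,2\pi)]$.

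The key step is to estimate $\frac{d}{dr}g(r,2\pi)$. Differentiating \eqref{eq:EL13} in $r$ and using Duhamel's formula for the linear ODE $\partial_\theta g + A_\theta g = 0$, one gets an expression of the form
\[
\partial_r g(r,2\pi) = -\int_0^{2\pi} g(r,2\pi)\,g(r,\theta)^{-1}\,\big(\partial_r A_\theta(r,\theta)\big)\,g(r,\theta)\,d\theta,
\]
and since $G$ is compact with a bi-invariant metric the adjoint action is an isometry on $\g$, so
\[
\Big|\tfrac{d}{dr}g(r,2\pi)\Big| \le \int_0^{2\pi} |\partial_r A_\theta(r,\theta)|\,d\theta = \int_{S_r} |F_A|\,d\theta.
\]
Hence the length of the path $r\mapsto g(r,2\pi)$ in $G$ over an interval $[r_1,r_2]\subset(0,1)$ is bounded by $\int_{r_1}^{r_2}\!\!\int_0^{2\pi} |F_A|\,d\theta\,dr$, which by Hölder's inequality (with $\tfrac1p+\tfrac1q=1$, $q<\infty$ since $p>1$) is at most $\|F_A\|_{L^p(\D^*)}\,\big(\int_{r_1}^{r_2}\!\!\int_0^{2\pi} r^{-q/2+\dots}\big)^{1/q}$ — more precisely one writes $\int |F_A|\,dr\,d\theta = \int |F_A|\,r^{-1}\,dv$ on the annulus $\{r_1\le|x|\le r_2\}$ and applies Hölder, and since $q<\infty$ the factor $\|r^{-1}\|_{L^q(\text{annulus})}$ is finite and tends to $0$ as $r_1,r_2\to 0$. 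Therefore $g(r,2\pi)$ converges in $G$ as $r\to 0$, and consequently $\Hol(A,r)=[g(r,2\pi)]$ converges to a conjugacy class, which is the desired limit holonomy.

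I would finish by noting gauge invariance: $\Hol(A,r)$ is independent of the chosen trivialization and gauge (this is recalled in the excerpt, cf.\ Lemma $3.1$ of \cite{Sibner4}), so the limit is intrinsic. The main obstacle is making the Hölder estimate genuinely convergent near the origin: the naive bound $\int_{S_r}|F_A|\,d\theta$ involves an unweighted angular integral while the $L^p$ norm carries the Euclidean area element $dv = r\,dr\,d\theta$, so one must carefully insert the $r^{-1}$ weight and check that $\int_0^{\rho}\!\!\int_0^{2\pi} r^{-q}\,r\,dr\,d\theta = 2\pi\int_0^\rho r^{1-q}\,dr$ is finite — this requires $q<2$, i.e.\ $p>2$. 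For $1<p\le 2$ the weight is borderline and one instead works annulus-by-annulus on dyadic scales $r\in[2^{-j-1},2^{-j}]$, using Theorem \ref{TH2} (Uhlenbeck's Coulomb gauge) on each annulus to get $\|A_\theta\|$ controlled by $\|F_A\|_{L^p}$ on that annulus, and then summing the resulting geometric-type series in $j$; the summability again uses $p>1$ so that $\|F_A\|_{L^p(\text{annulus}_j)}\to 0$. This dyadic decomposition is the real technical heart of the argument; the rest is the standard Duhamel/Gr\"onwall bookkeeping.
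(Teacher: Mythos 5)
Your overall strategy --- pass to a radial gauge so that $F_A=\partial_rA_\theta\,dr\wedge d\theta$, control $\partial_r g(r,2\pi)$ by Duhamel's formula and bi-invariance, and conclude that the path $r\mapsto g(r,2\pi)$ has finite length near the origin --- is the standard route (the paper gives no proof of Theorem~\ref{TH6} but cites \cite{Song2}, \cite{Sibner4}, \cite{Smith}, which argue essentially this way), and both the Duhamel identity and the bound $|\partial_r g(r,2\pi)|\le\int_0^{2\pi}|\partial_rA_\theta|\,d\theta$ are correct. The genuine problem is the next line, where you identify $\int_0^{2\pi}|\partial_rA_\theta|\,d\theta$ with an unweighted angular integral of $|F_A|$ and consequently write the total variation as $\int|F_A|\,r^{-1}\,dv$. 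With respect to the Euclidean metric the orthonormal coframe is $(dr,\,r\,d\theta)$, so $|F_A|=r^{-1}|\partial_rA_\theta|$, i.e. $|\partial_rA_\theta|\,dr\,d\theta=|F_A|\,r\,dr\,d\theta=|F_A|\,dv$ exactly. Hence the length of $r\mapsto g(r,2\pi)$ over $r_1\le r\le r_2$ is bounded by $\int_{\{r_1\le|x|\le r_2\}}|F_A|\,dv\le \lVert F_A\rVert_{L^p}\,\bigl(\pi(r_2^2-r_1^2)\bigr)^{1-1/p}\to0$, with no singular weight appearing anywhere; the Cauchy property follows (indeed for every $p\ge1$, by absolute continuity of the integral), and the case distinction $p>2$ versus $1<p\le2$ that you describe as ``the real technical heart'' is an artifact of the missing factor of $r$.

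This matters because your proposed repair for $1<p\le2$ does not close the gap you created. Summability of the dyadic series requires an $\ell^1$ bound on the per-annulus contributions, and the fact that $\lVert F_A\rVert_{L^p(\mathrm{annulus}_j)}\to0$ gives no such bound; worse, with your $r^{-1}$ weight the H\"older factor $\lVert r^{-1}\rVert_{L^q(\mathrm{annulus}_j)}$ grows like $2^{j(1-2/q)}$ when $q>2$, so the series need not converge under the stated hypothesis. Invoking Theorem~\ref{TH2} annulus by annulus is also unnecessary here: $A$ is smooth on $\D^*$ and the radial gauge exists globally, since $G$ is compact and the ODE $\partial_r s=-A_r s$ cannot blow up. Two minor points: the radial gauge transformation depends on $\theta$ as well as $r$; and you should add one sentence on why convergence of $g(r,2\pi)$ in $G$ forces convergence of the conjugacy class $[g(r,2\pi)]$ (the quotient map onto the space of conjugacy classes is continuous, so this is immediate). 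With the normalization corrected, your argument is complete and is in substance the standard proof.
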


\subsection{Poincar\'{e} inequality with connection on $\U^1$}\label{ss:poincare}

Recall that for any map $u\in W^{1,2}(\U^{1},\mathbb{R}^{K})$, we have the standard Poincar\'{e} inequality
$$
\int_{\U^{1}} | u-\bar{u}|^{2}\leq \int_{\U^{1}}|\partial_{\theta} u|^{2},
$$
where $\bar{u}$ is the average of $u$ on $\U^{1}$. Following \cite{Song2}, here we prove a generalized Poincar\'{e} inequality with connection $A$ on $\U^{1}$, but with more emphasis on the best constant $C(A)$.

Let $A$ be a connection on a trivial vector bundle $\U^{1}\times \Real^K$, by choosing a gauge similar to the balanced temporal gauge in Lemma \ref{LM1}, we may assume $A=\alpha d\theta$, where $\al\in \mathfrak{g}\subset
\mathfrak{so}(K)$ is constant and $\Hol(A)=[\exp(-2\pi \al)]$. Under a constant gauge transformation if necessary, we may further assume $\al$ has the standard form:
\begin{equation}\label{al}
 \alpha=
\left(
\begin{array}{c|c}
B_{2m\times 2m}& 0_{(K-2m)\times 2m} \\ \hline
0_{2m\times(K-2m)}& 0_{(K-2m)\times (K-2m)}
\end{array}
\right),
\end{equation}
where
\[B=\begin{pmatrix}
0&-a_{1} & & & & \\
a_{1}&0& & &\\
&       0&-a_{2}\\
&        a_{2}&0\\
&         & \ddots \\
&          &     &     0&-a_{m}\\
&          &     &     a_{m}&0\\

\end{pmatrix}.\]

Now define a constant by
\begin{equation}\label{PC}
C(A):= \min_{1\leq j\leq m} \lambda_{j},
\end{equation}
where
$$\lambda_{j}=
\left\{\begin{array}{l}
\min_{k\in \mathbb{Z}}(k+a_{j})^{2},\,  a_{j}\notin \mathbb{Z},\\[1ex]
1,              \quad\quad a_{j}\in \mathbb{Z}.
\end{array} \right.
$$
Note that, without loss of generality, we can always assume $a_{i}\in [0,1)$ for $1\leq i\leq m$. For, if $a_{i}=k+b_i$ where $k\in \Integer$ and $b_i\in [0,1)$, we can modify the gauge by winding the frame $k$ times along
the circle. Actually in this way, the constant $\al$ is uniquely determined by $A$ and vice versa.
Therefore, if $a_{i}\notin\mathbb{Z}$, the constant $\lambda_{i}$ is the minimum of $a^{2}_{i}$ and $(1-a_{i})^{2}$, which belongs to $(0,\frac{1}{4}]$.

The next lemma shows that $C(A)$ is well-defined, i.e. gauge invariant, which implies $C(A)$ is determined by its holonomy $\Hol(A)=[\exp(-2\pi\al)]$.

\begin{lemma}\label{LM4}
Let $A=ad\theta$ be a smooth connection on $\U^{1}$ and $\p_{\th, a}=\p_\th+a$. Then $C(A)$ is the first positive eigenvalue for the elliptic operator $L_{a}=-\partial^{2}_{\theta,a}:
W^{2,2}(\U^{1},\mathbb{R}^{K})\longrightarrow L^{2}(\U^{1},\mathbb{R}^{K})$.
\end{lemma}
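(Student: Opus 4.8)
The plan is to diagonalize $L_a$ explicitly after normalizing the connection, and then simply read off the spectrum. First I would record that $L_a$ is a non-negative self-adjoint elliptic operator. Since $a(\th)\in\g\subset\so(K)$ is skew-symmetric for every $\th$, integration by parts on $\U^1$ (which has no boundary) gives $\int_{\U^1}\<\p_{\th,a}u,v\>=-\int_{\U^1}\<u,\p_{\th,a}v\>$ for all $u,v\in W^{2,2}(\U^1,\Real^K)$, so $\p_{\th,a}$ is skew-adjoint and $L_a=(\p_{\th,a})^{*}\p_{\th,a}=-\p_{\th,a}^2$. Being a second order elliptic operator on a compact manifold, $L_a$ then has a discrete spectrum $0\le\mu_0\le\mu_1\le\cdots\to\infty$ with smooth eigenfunctions, and $\<L_au,u\>_{L^2}=\lVert\p_{\th,a}u\rVert_{L^2}^2\ge 0$; so it remains to identify the smallest strictly positive eigenvalue.

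Next I would reduce to the case of a constant connection. For a gauge transformation $s\in W^{2,2}(\U^1,G)$ with transformed connection $s^{*}a=s^{-1}\p_\th s+s^{-1}as$, a direct computation gives $\p_{\th,s^{*}a}(s^{-1}u)=s^{-1}\p_{\th,a}u$, hence $L_{s^{*}a}=s^{-1}L_a\,s$, so the spectrum of $L_a$ is gauge invariant. As observed just before the statement of the lemma, one may choose a gauge in which $a$ becomes the constant matrix $\al$ of the standard block form \eqref{al}, normalized so that each $a_j\in[0,1)$. It therefore suffices to compute the spectrum of $L_\al$; combined with the gauge invariance above, this simultaneously shows that $C(A)$, being equal to that spectral quantity, depends only on $\Hol(A)=[\exp(-2\pi\al)]$.

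Then I would diagonalize $L_\al$ blockwise. Split $\Real^K=V_0\oplus\bigoplus_{j=1}^m V_j$ where $V_j\cong\Real^2$ carries the $j$-th $2\times2$ block of $\al$ and $V_0\cong\Real^{K-2m}$ is the kernel of $\al$; the operator $L_\al=-(\p_\th+\al)^2$ preserves this splitting. On $V_0$ it is the ordinary Laplacian $-\p_\th^2$ on $\U^1$, with spectrum $\{k^2:k\in\Integer\}$ and smallest positive eigenvalue $1$. On $V_j$, identify $\Real^2\cong\Complex$ so that the block acts as multiplication by $\sqrt{-1}\,a_j$; then $\p_{\th,\al}=\p_\th+\sqrt{-1}\,a_j$, and applying $L_\al$ to the Fourier modes $e^{\sqrt{-1}k\th}$, $k\in\Integer$, gives $L_\al e^{\sqrt{-1}k\th}=(k+a_j)^2\,e^{\sqrt{-1}k\th}$. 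Hence the spectrum of $L_\al|_{V_j}$ is $\{(k+a_j)^2:k\in\Integer\}$, whose smallest positive element is exactly $\lambda_j$ — namely $\min_{k\in\Integer}(k+a_j)^2\in(0,\tfrac14]$ when $a_j\notin\Integer$, and $1$ when $a_j=0$.

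Finally, the full spectrum of $L_\al$ is the union of these pieces, so its smallest positive eigenvalue is $\min$ of $1$ (contributed by $V_0$ when $K>2m$) and $\lambda_1,\dots,\lambda_m$; since every $\lambda_j\le 1$, this equals $\min_{1\le j\le m}\lambda_j=C(A)$ (with the convention $C(A)=1$ when $m=0$), which is the assertion. The argument is essentially routine Fourier analysis on $\U^1$; the places that need care are the gauge reduction — one must check the conjugation identity $L_{s^{*}a}=s^{-1}L_a\,s$ and that the normalization $a_j\in[0,1)$ is precisely the one used in the definition of $C(A)$ — and the bookkeeping in the last step, where $0$ belongs to the spectrum exactly when $\Hol(A)$ fixes a nonzero vector (some $a_j\in\Integer$, or $K>2m$), which is why the correct statement is "first \emph{positive} eigenvalue". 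I do not expect a substantive obstacle beyond these points.
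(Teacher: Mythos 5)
Your proposal is correct and follows essentially the same route as the paper: reduce by gauge invariance (via the conjugation identity $L_{s^*a}=s^{-1}L_a s$) to the constant standard-form connection $\al$, then diagonalize $L_\al$ explicitly on the Fourier basis to read off the eigenvalues $(k+a_j)^2$ and identify the smallest positive one with $C(A)$. The only difference is cosmetic — you treat the $V_0$ block and the case $K>2m$ explicitly where the paper defers it to "a similar argument" — so there is nothing further to add.
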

\begin{proof}
First we assume that $A=\alpha d\theta$ where $\al$ has the form \eqref{al}. For simplicity, we may assume $K=2m$, the general case where $K>2m$ follows by a similar argument.
A complete orthogonal basis of $L^{2}(\U^{1}, \mathbb{R}^{2m}\cong\mathbb{C}^{m})$ is given by
$$
u_{l}^j(\theta)=(0, \dots, e^{l\sqrt{-1}\theta}, \dots, 0)^{T}, \, 1\leq j\leq m,\, l\in \mathbb{Z},
$$
where the $j$'th element of $u_l^j$ is $e^{l\sqrt{-1}\theta}$. Since
$$
L_{\alpha}(u_{l}^j)=-\partial^{2}_{\theta,\alpha}u_{l}^j=(l+a_{j})^{2}u_{l}^j,
$$
it follows that $\{(l+a_{j})^{2}\}$ are all eigenvalues of operator $L_{\alpha}$ corresponding to eigenfunctions $\{u_{l}^j\}$. It is then easy to verify that the first positive eigenvalue of $L_{\alpha}$ is exactly the constant $C(A)$ defined by (\ref{PC}).
   	
In general, suppose $A=a(\theta)d\theta$, then there exists a balance temporal gauge $s:\U^{1}\to SO(K)$ such that $s^{*}A=\alpha d\theta$  as above. Let $u:\U^{1}\longrightarrow \mathbb{R}^{K}$ be an eigenfunction of
$L_{\alpha}$ such that $L_\al(u)=\ld u$. Since
\[\p_{\th,\al}u=\partial_{\theta,s^{*}a} (u)=s^{-1}\circ\partial_{\theta,a}\circ s(u),\]
it follows
\[L_\al(u)=-\partial^{2}_{\theta,s^{*}a}u=-s^{-1}\circ \partial^{2}_{\theta,a} \circ s(u)=s^{-1}\circ L_a\circ s(u),\]
Thus
\[L_{a}(su)=s\circ L_\al(u)=\lambda su, \]
i.e. $su$ is an eigenfunction of $L_a$ with same eigenvalue $\ld$. Therefore, $L_{a}$ and $L_{\alpha}$ have same eigenvalues.
\end{proof}

Now we state the generalized Poincar\'e inequality. Let $\ker L_{a}$ be the kernel of operator  $L_{a}$, and $(\ker L_{a})^{\bot}$ be its orthocomplement under $W^{1,2}$ norm. Namely,
\[ \ker L_{a}=\ker\partial_{\theta,a}=\{u\in W^{1,2}(\U^{1},\mathbb{R}^{K})|\partial_{\theta,a} u=0\},\]
\[(\ker L_{a})^\bot=W^{1,2}(\U^1,\Real^K)/\ker L_a.\]

\begin{theorem}[Poincar\'{e} inequality]\label{p-inq}
Let $A=ad\theta$ be a smooth connection on a trivial vector bundle $\U^{1}\times \Real^K$. Then for all $u\in(\ker L_{a})^\bot$, there holds
\begin{equation}\label{Poin}
C(A)\int_{\U^{1}}| u|^{2}d\theta\leq\int_{\U^{1}}|\partial_{\theta,a}u|^{2}d\theta.
\end{equation}
\end{theorem}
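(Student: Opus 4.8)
The plan is to reduce the general case $A=a(\theta)\,d\theta$ to the standard case $A=\alpha\,d\theta$ via the balanced temporal gauge, and then in the standard case to use the explicit Fourier basis already constructed in the proof of Lemma~\ref{LM4}. For the reduction: by the argument in Lemma~\ref{LM4} there is a gauge transformation $s:\U^1\to SO(K)$ with $s^*(a\,d\theta)=\alpha\,d\theta$, where $\alpha$ has the block form \eqref{al}. Since $\partial_{\theta,a}(su)=s\,\partial_{\theta,\alpha}u$ pointwise and $s$ is orthogonal, both sides of \eqref{Poin} are invariant under $u\mapsto s u$: $\int|su|^2 = \int|u|^2$, $\int|\partial_{\theta,a}(su)|^2=\int|\partial_{\theta,\alpha}u|^2$, and $s$ maps $(\ker L_\alpha)^\bot$ bijectively onto $(\ker L_a)^\bot$ since it intertwines the two operators. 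So it suffices to prove \eqref{Poin} for $A=\alpha\,d\theta$ with $C(A)=\min_j\lambda_j$ as in \eqref{PC}, and we may further reduce to $K=2m$ (the extra $\mathbb{R}^{K-2m}$ factor carries the trivial connection, where $\partial_{\theta,\alpha}=\partial_\theta$ and the classical Poincaré inequality with its sharp constant $1$ applies on $(\ker\partial_\theta)^\bot$; note $\lambda_j\le 1/4<1$ whenever some $a_j\notin\mathbb{Z}$, and $\lambda_j=1$ otherwise, so the trivial block never lowers the constant below what the nontrivial blocks give, except in the purely-integer case where everything is $1$).

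For the standard case $A=\alpha\,d\theta$ on $\U^1\times\mathbb{R}^{2m}\cong\mathbb{C}^m$, expand $u=\sum_{j=1}^m\sum_{l\in\mathbb{Z}}c_{l}^{j}\,u_l^j$ in the orthogonal basis $\{u_l^j\}$ from Lemma~\ref{LM4}, where $u_l^j$ has $j$-th complex component $e^{l\sqrt{-1}\theta}$. Then $\partial_{\theta,\alpha}u_l^j=\sqrt{-1}(l+a_j)u_l^j$ (reading $a_j$ as the eigenvalue of the $2\times 2$ block $\begin{pmatrix}0&-a_j\\a_j&0\end{pmatrix}$ acting on $\mathbb{C}$), so by orthogonality
\begin{equation}\label{e:fourier-poincare}
\int_{\U^1}|\partial_{\theta,\alpha}u|^2\,d\theta = 2\pi\sum_{j,l}(l+a_j)^2|c_l^j|^2,\qquad \int_{\U^1}|u|^2\,d\theta = 2\pi\sum_{j,l}|c_l^j|^2.
\end{equation}
The condition $u\in(\ker L_\alpha)^\bot$ removes exactly those modes $(j,l)$ with $(l+a_j)^2=0$, i.e. the case $a_j\in\mathbb{Z}$, $l=-a_j$; on every surviving mode $(l+a_j)^2\ge\lambda_j\ge C(A)$ by the definition \eqref{PC} of $\lambda_j$ (when $a_j\in\mathbb{Z}$ the nonzero modes have $(l+a_j)^2\ge 1=\lambda_j$; when $a_j\notin\mathbb{Z}$ all modes have $(l+a_j)^2\ge\min_{k\in\mathbb{Z}}(k+a_j)^2=\lambda_j$). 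Summing gives $\int|\partial_{\theta,\alpha}u|^2\ge C(A)\int|u|^2$, which is \eqref{Poin}.

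The only genuinely delicate point is the \emph{bookkeeping of the kernel}: one must check that $(\ker L_a)^\bot$ as defined (the $W^{1,2}$-orthocomplement, equivalently the quotient) is precisely the span of the modes with $(l+a_j)^2>0$, so that the sharp constant is attained exactly on this space and not spoiled by a surviving zero mode. This is immediate from \eqref{e:fourier-poincare} since the bilinear form $\int\langle\partial_{\theta,\alpha}u,\partial_{\theta,\alpha}v\rangle$ is diagonal in the $\{u_l^j\}$ basis, so $W^{1,2}$-orthogonality coincides with $L^2$-orthogonality of the Fourier coefficients, and $\ker\partial_{\theta,\alpha}$ is spanned by the zero modes; the complement is the closed span of the rest. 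A secondary point is justifying that the Fourier series of $u\in W^{1,2}$ converges so that \eqref{e:fourier-poincare} is legitimate — this is standard ($W^{1,2}(\U^1)\hookrightarrow$ absolutely convergent Fourier series, or simply Parseval for $u$ and $\partial_{\theta,\alpha}u\in L^2$). Everything else is routine.
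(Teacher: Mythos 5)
Your proposal is correct, but it takes a genuinely different route from the paper's. The paper argues variationally: it sets $\lambda(A)=\inf E_A(u)/\lVert u\rVert_{L^2}^2$ over $(\ker L_a)^\bot$, extracts a minimizer $v_0$ by weak $W^{1,2}$-compactness and strong $L^2$-convergence, identifies $v_0$ as an eigenfunction of $L_a$ with eigenvalue $\lambda(A)$, and then invokes Lemma~\ref{LM4} (the spectral computation) to conclude $\lambda(A)=C(A)$. You bypass the direct method entirely: after the same gauge reduction to $\alpha$ in the normal form \eqref{al}, you expand $u$ in the explicit eigenbasis $\{u_l^j\}$ and read off \eqref{Poin} from Parseval, noting that orthogonality to the kernel deletes exactly the modes with $(l+a_j)^2=0$ and that every surviving mode satisfies $(l+a_j)^2\ge\lambda_j\ge C(A)$ by \eqref{PC}. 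Your computation $\partial_{\theta,\alpha}u_l^j=\sqrt{-1}(l+a_j)u_l^j$ is right (the block $B_j$ acts on $\Complex$ as multiplication by $\sqrt{-1}a_j$), and your handling of the trivial $(K-2m)$-block and of the kernel bookkeeping is sound. What your route buys is a shorter, fully explicit argument that in effect re-proves Lemma~\ref{LM4} along the way and makes the sharpness of $C(A)$ transparent (equality is attained on the lowest surviving mode); what the paper's route buys is robustness, since it would survive in settings where $L_a$ cannot be diagonalized explicitly. One small caveat: the paper defines $(\ker L_a)^\bot$ loosely (both as a $W^{1,2}$-orthocomplement and as a quotient); the invariantly correct reading, and the one under which your claim that $s$ carries $(\ker L_\alpha)^\bot$ onto $(\ker L_a)^\bot$ is immediate (since $s$ is pointwise orthogonal), is $L^2$-orthogonality to the finite-dimensional kernel, which in the diagonal basis coincides with the condition you actually use.
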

\begin{proof}
Consider the functional $E_A(u)=\int_{\U^{1}}|\partial_{\theta,a}u|^{2}d\theta$ for $u\in (\ker L_{a})^\bot$, and let
$$
\lambda(A)=\inf_{u\in (\ker L_{a})^\bot\setminus\{0\}} \frac{E_{A}(u)}{\lVert u\rVert^{2}_{L^{2}}}=\inf_{u\in (\ker L_{a})^{\bot}\,\text{with}\,\lVert u\rVert_{L^{2}}=1}E_{A}(u).
$$
We claim that $\lambda(A)=C(A)$, from which \eqref{Poin} follows.

First we show $\ld(A)\le C(A)$, which follows directly from the definitions. Indeed, if $u_1\in W^{2,2}$ is an eigenfunction such that $L_au_0=C(A)u_0$, then
\[ E_A(u_0)=\int_{\U^1}\<\p_{\th,a}u_0,\p_{\th,a}u_0\>d\th=\int_{\U^1}\<u_0,L_au_0\>d\th=C(A)\norm{u_0}_{L^2}.\]

To prove $\ld(A)\ge C(A)$, let $\{v_{n}\}\subset (\ker L_a)^\bot$ be a sequence such that $\lim_{n\to \infty}E_A(v_{n})=\lambda(A)$ and $\lVert v_{n}\rVert_{L^{2}}=1$. Since $\{v_{n}\}$ is bounded in $W^{1,2}$, there exist
a subsequence, still denoted by $\{v_{n}\}$, and some $v_{0}\in (\ker L_a)^\bot$, such that
\begin{align*}
v_{n}\rightarrow v_{0} &\text{\quad weakly~in \, }W^{1,2},\\
v_{n}\rightarrow v_{0} &\text{\quad strongly~in\, }L^{2}.
\end{align*}
which implies $\lVert v_{0}\rVert_{L^{2}}=1$ and
$$E_A(v_{0})\leq \liminf_{n\to \infty}E_A(v_{n})=\lambda(A).$$
Thus $v_0$ is a non-trivial function in $(\ker L_a)^\bot$ and the above inequality is actually an equality.
   	
Therefore, $v_{0}$ is a weak solution to the Euler-Lagrange equation,
$$L_av_0=-\partial_{\theta,a}^2v_{0}=\lambda(A) v_{0}.$$
It follows from standard elliptic theory that $v_{0}\in W^{2,2}$, which finishes our proof.
\end{proof}

Since for any $u\in W^{2,2}(\U^{1},\mathbb{R}^{K})$,  $\partial_{\theta,a} u$ belongs to $(\ker \partial_{\theta,a})^\bot$, we immediately get

\begin{corollary}\label{CO}
For any $u\in W^{2,2}(\U^{1},\mathbb{R}^{K})$, we have
$$C(A)\int_{\U^{1}}| \partial_{\theta,a}u|^{2}d\theta\leq\int_{\U^{1}}|\partial^{2}_{\theta,a}u|^{2}d\theta.$$
\end{corollary}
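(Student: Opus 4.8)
The plan is to derive Corollary~\ref{CO} as an immediate consequence of the Poincar\'e inequality (Theorem~\ref{p-inq}) together with the elementary observation that $\p_{\th,a}$ maps $W^{2,2}$ into the orthocomplement of $\ker L_a=\ker\p_{\th,a}$. The only real content to verify is that, for $u\in W^{2,2}(\U^1,\Real^K)$, the function $v:=\p_{\th,a}u$ lies in $(\ker L_a)^\bot$; once this is established, applying \eqref{Poin} to $v$ gives
\[
C(A)\int_{\U^1}|v|^2\,d\th \leq \int_{\U^1}|\p_{\th,a}v|^2\,d\th = \int_{\U^1}|\p^2_{\th,a}u|^2\,d\th,
\]
which is exactly the claimed inequality since $|v|^2=|\p_{\th,a}u|^2$.

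So the one step requiring care is the orthogonality claim. I would argue as follows. Recall that under the balanced temporal gauge we may take $a$ so that $\p_{\th,a}$ is (conjugate to) a constant-coefficient skew-symmetric operator, so that $\ker\p_{\th,a}$ is finite-dimensional and $\p_{\th,a}$ is formally skew-adjoint on $L^2(\U^1,\Real^K)$, i.e. $\int_{\U^1}\<\p_{\th,a}f,g\>\,d\th=-\int_{\U^1}\<f,\p_{\th,a}g\>\,d\th$ for all $f,g\in W^{1,2}$, because $\int_{\U^1}\p_\th\<f,g\>\,d\th=0$ and the $a$-term contributes $\<af,g\>+\<f,ag\>=0$ by skew-symmetry of $a\in\mathfrak{so}(K)$. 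Hence for any $w\in\ker\p_{\th,a}$ and any $u\in W^{2,2}$,
\[
\int_{\U^1}\<\p_{\th,a}u,\,w\>\,d\th = -\int_{\U^1}\<u,\,\p_{\th,a}w\>\,d\th = 0,
\]
so $\p_{\th,a}u\perp\ker\p_{\th,a}=\ker L_a$ in $L^2$. Since $\p_{\th,a}u\in W^{1,2}$, it indeed lies in $(\ker L_a)^\bot$ as defined before Theorem~\ref{p-inq}.

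I do not anticipate a genuine obstacle here; the statement is a formal corollary. The only point worth a sentence of justification is the skew-adjointness of $\p_{\th,a}$, which is why the decomposition $W^{1,2}=\ker\p_{\th,a}\oplus(\ker L_a)^\bot$ is orthogonal and why the image of $\p_{\th,a}$ lands in the second factor. Everything else is just substituting $v=\p_{\th,a}u$ into \eqref{Poin} and reading off the result, so the proof in the paper should be (and is) a single short paragraph.
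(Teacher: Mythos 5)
Your proof is correct and follows exactly the paper's route: the paper derives Corollary~\ref{CO} in one line by asserting that $\partial_{\theta,a}u$ lies in $(\ker \partial_{\theta,a})^\bot$ and then invoking Theorem~\ref{p-inq}. You additionally supply the justification for that orthogonality (skew-adjointness of $\partial_{\theta,a}$ coming from $a\in\mathfrak{so}(K)$), which the paper leaves implicit; this is the right verification and nothing more is needed.
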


\begin{rem}
From the definition of $C(A)$ in (\ref{PC}), it is clear that $C(A)$ might tend to $0$ even if $A$ varies in a compact subset. Thus in general there is no uniform Poincar\'e inequality, as already pointed out in
\cite{Song2}.
\end{rem}

\section{Decay estimates of twisted harmonic maps}\label{s: twisted harmonic map}

In this section, we restrict ourselves within a simple setting where the connection is flat. That is, we consider the energy functional $E_A(u)=\int|\n_{A}u|^{2}dv$ where $A$ is a flat connection. The critical points of $E_A(u)$ can be regarded as a generalization of the harmonic maps, which are known as gauged or twisted harmonic maps, satisfying the equation \eqref{e:twisted-harmonic-map}, i.e. $\n^{*}_{A}\n_{A}u=0$. We are concerned with the asymptotic decay estimates of twisted harmonic maps on a punctured disk.

Since $E_A(u)$ is conformal invariant, we can identify the punctured disk $\D_{r_0}^*$ with an infinitely long cylinder $\C=[T_0,+\infty)\times \U^1$, which is endowed with standard metric $g=dt^2+d\th^2$. Suppose $A=\al d\th$  is a
flat connection on a trivial bundle $\C\times M$ and $u:\C\longrightarrow M$ is a smooth map. For later applications on the YMH fields in the next section, let us assume $u$ satisfies a more general equation
\begin{equation}\label{eq:EL11}
\n^{*}_{A}\n_{A}u=f,
\end{equation}
where $f\in \Ga(u^{*}TM)$ is exponentially bounded by
\begin{equation}\label{Decay-of-f}
\sup_{\C_t}(| f|+|\pa f|)\leq C \ep e^{-\ka t},
\end{equation}
where $\C_t=\{t\}\times\U^1$ is the circle at $t\geq T_{0}$, and $\ka>0$ is a constant.

Using the equivariant embedding Theorem~\ref{TH1}, we can write \eqref{eq:EL1} as
\begin{equation}\label{eq:EL12}
u_{tt}+\partial^{2}_{\theta,\alpha}u=\Ga(u)(\n_{A}u,\n_{A}u)+f.
\end{equation}
Then a standard argument yields the following $\ep$-regularity for $u$ (cf. Lemma 4.1 in \cite{Song2}).

\begin{lemma}\label{LM2}
There exists a constant $\varepsilon_{0}>0$ such that if $u$ is a smooth solution to equation \eqref{eq:EL11} with
$$\lVert \n_{A}u\rVert_{L^{2}(P_{t})}\leq \varepsilon_{0},$$
for $t\ge T_0+1$ and $P_{t}=[t-1,t+1]\times \U^{1}$, then
$$\sup_{\C_t}| \n_{A}u|\leq C(\lVert \n_{A}u\rVert_{L^{2}(P_{t})}+\lVert f\rVert_{L^{\infty}(P_{t})}).$$
\end{lemma}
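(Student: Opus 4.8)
The plan is to prove the $\ep$-regularity statement (Lemma~\ref{LM2}) by a standard blow-up/rescaling argument combined with the interior elliptic estimates already available for the coupled system. Since the connection here is flat, one can work directly with equation \eqref{eq:EL12} on the cylinder; the term $\partial_{\theta,\alpha}^2 u$ together with $u_{tt}$ is, up to the zeroth-order potential coming from $\alpha$, a uniformly elliptic operator with bounded coefficients on each parabolic-type neighbourhood $P_t$, so standard $W^{2,p}$ theory applies on fixed-size subcylinders.

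First I would set up the statement as a local estimate: it suffices to bound $|\n_A u|$ at a single point $(t,\theta)$ of $\C_t$ in terms of the $L^2$-norm of $\n_A u$ on $P_t$ and the $L^\infty$-norm of $f$ on $P_t$, uniformly in $t\ge T_0+1$. After a translation in the $t$-variable we may as well work on $P = [-1,1]\times\U^1$ with $\|\n_A u\|_{L^2(P)}\le\ep_0$. Because $A$ is flat we can choose, on this fixed-size cylinder, a gauge in which $A=\alpha\,d\theta$ with $\alpha$ of the normal form \eqref{al}; then $\n_A u = u_t\,dt + \partial_{\theta,\alpha}u\,d\theta$ and \eqref{eq:EL12} reads $\Delta u = \Ga(u)(\n_A u,\n_A u) + f - 2\alpha.u_\theta - \alpha^2.u + (\text{flatness terms that vanish})$, i.e. a semilinear elliptic equation with quadratic gradient nonlinearity plus the bounded inhomogeneity $f$ and lower-order terms linear in $u$ (which is bounded since $M$ is compact).

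The core of the argument is the standard Sacks--Uhlenbeck-type iteration for critical semilinear systems: the nonlinearity $|\Ga(u)(\n_A u,\n_A u)|\le C|\n_A u|^2$ has the borderline scaling in dimension two, so the smallness condition $\|\n_A u\|_{L^2(P)}\le\ep_0$ is exactly what is needed to absorb it. Concretely, I would first use a Rivière-type or Hélein-moving-frame/direct $W^{2,p}$ bootstrap — exactly as cited for Theorem~\ref{TH3} and Remark~\ref{r:reg} via Lemma 4.1--4.2 of \cite{Song1} — to get $\|u-\bar u\|_{W^{2,p}(P_{1/2})}\le C(\|\n_A u\|_{L^2(P)} + \|f\|_{L^\infty(P)})$ for some $p>2$, hence by Sobolev embedding a bound on $\|\n_A u\|_{C^0(P_{1/2})}$; here one treats $-2\alpha.u_\theta-\alpha^2.u+f$ as a fixed inhomogeneous term lying in $L^\infty\subset L^p$. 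Rescaling back to the original cylinder $P_t$ and covering $\C_t$ by finitely many such half-size cylinders gives $\sup_{\C_t}|\n_A u|\le C(\|\n_A u\|_{L^2(P_t)}+\|f\|_{L^\infty(P_t)})$, which is the claim. (Strictly speaking the precise formulation in \eqref{eq:EL12} also presumes a slightly different $\ep$ than the one in Theorem~\ref{TH3}; I would simply take $\ep_0$ to be the minimum of the constants appearing there.)

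The main obstacle, and the only place where some care is genuinely needed, is handling the connection term uniformly in $t$: the coefficients of the operator and the lower-order terms $2\alpha.u_\theta+\alpha^2.u$ involve $\alpha$, and a priori one worries that changing $t$ could force a large gauge. But since $A$ is \emph{flat}, by the discussion in Section~\ref{ss: holonomy} the holonomy is $t$-independent and $\alpha$ can be taken to be a fixed constant matrix of the form \eqref{al} simultaneously on every $P_t$; thus all constants in the elliptic estimates depend only on $|\alpha|$ (equivalently on $\Hol(A)$) and on $M$, not on $t$. With that observation the estimate is uniform and the proof is complete; the remaining steps are the routine $W^{2,p}$ bootstrap and a covering argument, which I would not spell out in detail beyond citing \cite{Song1,SU}.
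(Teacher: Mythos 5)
Your overall strategy --- reduce to a standard interior $\ep$-regularity estimate for a semilinear elliptic system with critical quadratic nonlinearity, and get uniformity in $t$ from the translation invariance of the constant flat connection $A=\al\,d\th$ --- is exactly the ``standard argument'' the paper invokes (it only cites Lemma 4.1 of \cite{Song2} and gives no details). However, the specific intermediate step you rely on is false, and the error propagates to your final step. You claim
\[
\lVert u-\bar u\rVert_{W^{2,p}(P_{1/2})}\le C\bigl(\lVert \n_A u\rVert_{L^2(P)}+\lVert f\rVert_{L^\infty(P)}\bigr),
\]
treating $-2\al.u_\th-\al^2.u+f$ as a harmless inhomogeneity. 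But $u-\bar u$ and $du$ are not gauge-covariant quantities and are \emph{not} controlled by $\n_A u$ when $\al\neq 0$: for a covariantly constant section, say $u(t,\th)=\exp(-\al\th)\,p$ with $\exp(-2\pi\al)p=p$ and $\al.p\neq 0$, one has $\n_A u\equiv 0$ and $f\equiv 0$, yet $u-\bar u$ is a fixed $O(1)$ function of $\th$. The source of the problem is that $u_\th=\partial_{\th,\al}u-\al.u$ and $\al.u=X_\al(u)$ is only bounded (by compactness of $M$), not small, so $\lVert du\rVert_{L^2}$ is not bounded by $\lVert\n_A u\rVert_{L^2}$; the same issue reappears at the end when you pass from a $C^0$ bound on $du$ to one on $\n_A u$. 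This matters: the lemma is used later precisely because the right-hand side is proportional to $\lVert\n_A u\rVert_{L^2(P_t)}$, so an estimate with an extra additive $O(|\al|)$ term would be useless. (Note this is genuinely different from Theorem~\ref{TH3}, where the Coulomb gauge makes $\lVert A\rVert_{W^{1,p}}\le C\lVert F_A\rVert_{L^p}$ small, so there $Au$ really is a small perturbation.)

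The fix is short and keeps your architecture intact: since $A$ is flat, on each simply connected unit ball $B\subset P_t$ (which does not wrap around the cylinder) the gauge transformation $s(\th)=\exp(\al\th)$ trivializes $A$, i.e.\ $s^*A=0$; the transformed map $\tilde u=\rho(s)^{-1}u$ then satisfies the ordinary inhomogeneous harmonic map equation $\De\tilde u=\Ga(\tilde u)(d\tilde u,d\tilde u)+\tilde f$ with $|d\tilde u|=|\n_A u|$ and $|\tilde f|=|f|$ pointwise (because $\rho(G)\subset SO(K)$). Now the classical Sacks--Uhlenbeck estimate applies verbatim and gives $\sup_{B_{1/2}}|d\tilde u|\le C(\lVert d\tilde u\rVert_{L^2(B)}+\lVert\tilde f\rVert_{L^\infty(B)})$ with $C$ independent of the center of $B$; covering $\C_t$ by finitely many such balls yields the lemma. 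Alternatively one can argue covariantly throughout, deriving a Bochner inequality $-\De|\n_A u|^2\le C|\n_A u|^4+C|f|^2$ and applying the mean value inequality for subsolutions with small $L^1$ mass. Either route repairs the proof; as written, yours does not close.
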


The following theorem gives a sharp decay estimate of $\n_A u$, from which Theorem~\ref{t:twisted-harmonic-map} follows easily.

\begin{theorem}\label{TH5}
Suppose $A=\alpha d\theta$ is a flat connection on $\C$, $u:\C\to M$ is a solution of \eqref{eq:EL12} with energy $E_A(u)=\ep^2 \le \ep_0^2$ and $f$ is exponentially bounded by \eqref{Decay-of-f} with $\ka\ge \frac43$. Then for any $t\ge T_0+1$,
$$\sup_{\C_t}| \n_{A}u|\leq C \ep e^{-\sqrt{C(\al)} (t-T_0)},$$
where $C(\alpha)=C(A)$ is the Poincar\'e constant of $A$ defined by \eqref{PC}.
\end{theorem}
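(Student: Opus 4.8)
The plan is to follow the standard three-step scheme for exponential decay on the cylinder, but with R{\aa}de's refinement of working with the square root of the angular energy in order to capture the sharp rate $\sqrt{C(\alpha)}$. Set $\Th(t):=\int_{\C_t}|\pa u|^2\,d\th$ and $\ga(t):=\sqrt{\Th(t)}$, where $\pa=\p_\th+\al$. First I would record the basic ingredients: the $\ep$-regularity Lemma~\ref{LM2} shows that once $\|\n_A u\|_{L^2(P_t)}$ is small the pointwise norm $|\n_A u|$ is controlled on $\C_t$, and since the total energy is $\ep^2$, this smallness holds for all $t\ge T_0+1$ (after shrinking $\ep_0$). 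In particular $\Th(t)$ is finite and $|\n_A u|=o(1)$ uniformly as $t\to\infty$. I would also note that $\int_{T_0}^\infty \Th(t)\,dt\le E_A(u)\le \ep^2$, so $\Th$ is integrable.

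The heart of the argument is a differential inequality for $\ga$. Differentiating $\Th$ twice in $t$ and using the equation \eqref{eq:EL12} together with integration by parts in $\th$ (moving $\pa$ around, which is legitimate since $\pa$ is skew-adjoint on $\U^1$), one gets
\begin{equation}\label{e:theta-diff-ineq}
\Th''(t)\ge 2\int_{\C_t}|\p_t\pa u|^2\,d\th+2\int_{\C_t}|\pa^2 u|^2\,d\th-\text{(error terms)},
\end{equation}
where the error terms come from the curvature term $\Ga(u)(\n_A u,\n_A u)$ and from $f$, and are bounded by $C\,\|\n_A u\|_{L^\infty(\C_t)}\,\Th(t)+C\ep e^{-\ka t}\,\ga(t)$ using \eqref{Decay-of-f} and Cauchy--Schwarz; since $\ka\ge\frac43$ and, anticipating the conclusion, $\sqrt{C(\al)}\le 1<\ka$, the $f$-contribution is of strictly lower order. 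Applying Corollary~\ref{CO}, namely $C(\al)\int_{\C_t}|\pa u|^2\le\int_{\C_t}|\pa^2 u|^2$, to the second term on the right of \eqref{e:theta-diff-ineq}, and the Cauchy--Schwarz inequality $\Th'(t)^2=\big(2\int\langle\p_t u,\pa^2 u\rangle_{\!?}\big)^2\le 4\Th(t)\cdot\big(\text{radial piece}\big)$ in the form that relates $\Th'$ to $\int|\p_t\pa u|^2$, one derives the key inequality for $\ga=\sqrt{\Th}$:
\begin{equation}\label{e:gamma-ineq}
\ga''(t)\ge \big(C(\al)-\eta(t)\big)\,\ga(t),
\end{equation}
where $\eta(t)\to 0$ as $t\to\infty$ because $\|\n_A u\|_{L^\infty(\C_t)}\to 0$. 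The point of passing to $\ga$ rather than $\Th$ is precisely that the spectral gap $C(\al)$ appears with the correct power, whereas $\Th$ would only give rate $2\sqrt{C(\al)}$ in a way that is not sharp once one returns to $|\n_A u|$.

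From \eqref{e:gamma-ineq} together with $\ga(t)\to 0$ and $\ga\in L^2$, a maximum-principle / ODE comparison argument (comparing with solutions of $w''=(C(\al)-\eta)w$) yields $\ga(t)\le C\ga(T_0)e^{-\sqrt{C(\al)}(t-T_0)}$, hence $\Th(t)=\ga(t)^2\le C\ep^2 e^{-2\sqrt{C(\al)}(t-T_0)}$. Next I would control the radial energy $\int_{\C_t}|u_t|^2\,d\th$ by the angular one, using the Pohozaev identity obtained by pairing \eqref{eq:EL12} with $u_t$ and integrating over $[t,\infty)\times\U^1$: this gives $\int_{\C_t}|u_t|^2\,d\th\le \int_{\C_t}|\pa u|^2\,d\th+\text{(exponentially small }f\text{-terms)}$, so the full quantity $\int_{\C_t}|\n_A u|^2\,d\th$ decays like $e^{-2\sqrt{C(\al)}(t-T_0)}$. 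Finally, feeding this $L^2$-decay on $P_t$ back into the $\ep$-regularity Lemma~\ref{LM2} upgrades it to the pointwise bound $\sup_{\C_t}|\n_A u|\le C\ep e^{-\sqrt{C(\al)}(t-T_0)}$, which is the assertion (with the small loss in the exponential constant absorbed by shifting $T_0$; one can also iterate on shrinking annuli to avoid any loss).

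The main obstacle I anticipate is the clean derivation of \eqref{e:gamma-ineq} with the \emph{sharp} constant $C(\al)$: one must be careful that the boundary/error terms arising from the second fundamental form and from $f$ genuinely go into the $\eta(t)$ that vanishes at infinity, rather than contaminating the leading coefficient, and that the Cauchy--Schwarz step relating $\ga''$ to $\int|\pa^2 u|^2$ does not lose a factor. This is exactly where the Poincar\'e inequality of Section~\ref{ss:poincare} with its identified best constant — and the device of using $\ga=\sqrt{\Th}$ instead of $\Th$ — is essential.
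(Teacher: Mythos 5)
Your overall architecture (angular energy $\Th$, the substitution $\ga=\sqrt{\Th}$, the Poincar\'e inequality of Corollary~\ref{CO} with its best constant, a Pohozaev-type identity for the radial part, and $\ep$-regularity to upgrade $L^2$-decay to pointwise decay) is exactly the paper's. But there is a genuine gap at the decisive step: you pass from the differential inequality $\ga''\ge (C(\al)-\eta(t))\ga$ with $\eta(t)\to 0$ \emph{qualitatively} directly to the sharp bound $\ga(t)\le C\ep\, e^{-\sqrt{C(\al)}(t-T_0)}$. This implication is false in general. For instance $\ga(t)=t\,e^{-\mu t}$ satisfies $\ga''=(\mu^2-2\mu/t)\ga$, so a perturbation $\eta(t)\to 0$ that is not integrable produces solutions that are not $O(e^{-\mu t})$. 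A comparison argument with merely vanishing $\eta$ only yields $e^{-(\sqrt{C(\al)}-\ep')(t-T_1(\ep'))}$ for every $\ep'>0$, with a constant and a starting time depending on $\ep'$; and your suggestion that the loss can be ``absorbed by shifting $T_0$'' is a misconception --- no shift of the base point converts a strictly smaller exponent into the sharp one.

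The paper closes this gap with a two-pass bootstrap that your proposal is missing. First, inserting the crude bound $\sup_{\C_t}|\n_A u|\le C\ep$ into the differential inequality gives $\ga''\ge \de^2\ga - C\ep e^{-\ka t}$ with the \emph{non-sharp} constant $\de^2=C(\al)-C\ep^2$, and an explicit two-point comparison function on $[T_1,T_2]$ yields $\ga\le C\ep(e^{-\de(t-T_1)}+e^{-\de(T_2-t)})$. Feeding this back through the $\ep$-regularity and the decay of $H$ shows that the coefficient error $\eta(t)=C\sup_{\C_t}|\n_Au|^2$ is itself exponentially small (in particular integrable), so it can be moved to the right-hand side as an inhomogeneous forcing: $\ga''-C(\al)\ga\ge -C\ep^2(e^{-2\de(t-T_1)}+e^{-2\de(T_2-t)})-C\ep e^{-\ka t}$. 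A second, more elaborate comparison function (containing the exact modes $e^{\pm\sqrt{C(\al)}t}$ plus particular solutions matching each forcing term, which is where the hypothesis $\ka\ge \tfrac43$ and the fact $2\de>\sqrt{C(\al)}$ for small $\ep$ are used) then gives the sharp rate $\sqrt{C(\al)}$ with a constant independent of any auxiliary $\ep'$. Your closing remark about ``iterating on shrinking annuli'' gestures in the right direction but is not the right iteration; the iteration must happen in the ODE comparison itself, upgrading the decay rate of the perturbation before the sharp constant can be extracted. The remainder of your outline (the $H$/Pohozaev control of the radial energy and the final application of Lemma~\ref{LM2}) is consistent with the paper.
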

\begin{proof}
By definition $|\n_Au|^2=|\p_t u|^2 + |\p_{\th,\al}u|^2$, we may divide the energy $E_A(u)$ into two parts
\begin{equation}\label{eq:EL15}
\Theta(t)=\int_{0}^{2\pi} | \pa u|^{2}d\theta,\quad H(t)=\frac{1}{2}\int_{0}^{2\pi} |\partial_{t}u|^{2}-| \pa u|^{2}d\theta.
\end{equation}
Since the total energy $E_A(u)$ on the cylinder $\C$ is bounded, it is obvious that $|\n_{A}u|$ vanishes as $t\to \infty$. Hence
\[ \lim_{t\to\infty}\Th(t)=\lim_{t\to\infty}H(t)=0.\]
Moreover, the $\varepsilon$-regularity (i.e. Lemma~\ref{LM2}) together
with the exponential bound \eqref{Decay-of-f} of $f$ yields
$$\sup_{\C_t}| \n_{A}u|\leq C\(E_A(u,P_{t})^{\frac{1}{2}}+\norm{f}_{L^\infty(P_t)}\)\le C\ep,$$

We will prove the theorem in 4 steps as follows.

\noindent\emph{Step 1: exponential decay of $H(t)$.}\\
A simple calculation yields
\[H'=\frac{d}{dt}H=\int_{\U^{1}}\langle u_{t},u_{tt}\rangle-\int_{\U^{1}}\langle\pa u,\partial_{t}\pa u\rangle
=\int_{\U^{1}}\langle u_{t},u_{tt}+\partial^{2}_{\theta,\alpha} u\rangle.\]
Using equation \eqref{eq:EL12} and the exponential bound \eqref{Decay-of-f} of $f$, we get
\begin{equation}\label{e:H1}
H'=\int_{\U^{1}}\langle u_{t}, \Ga(u)(\n_A u,\n_A u)+ f\rangle
=\int_{\U^{1}}\langle u_{t}, f\rangle\leq C\ep^2 e^{-\ka t}.
\end{equation}
It follows that
\begin{equation}\label{eq:H}
| H(t)|\leq \int_{t}^{\infty}| H^{'}(s)| ds\leq C\ep^2 e^{-\ka t}.
\end{equation}

\noindent\emph{Step 2: differential inequality of $\ga=\sqrt{\Th}$.}\\
Taking twice derivatives of $\Th$ and substituting $u_{tt}$ by equation \eqref{eq:EL12}, we get
\begin{align*}
\Th^{''}&=2\int_{\U^{1}} |\partial_{t}\pa u|^{2} +2\int_{\U^{1}} \langle\pa u_{tt},\pa u\rangle\\
&=2\int_{\U^{1}} |\partial_{t}\pa u|^{2} +2\int_{\U^{1}} |\pa^2 u|^{2}\\
&-2\int_{\U^{1}} \langle\Ga(u)(\n_{A}u, \n_{A}u),\pa^2 u\rangle-2\int_{\U^{1}} \langle\pa f,\pa u\rangle,
\end{align*}
For the last two terms, we have
\begin{align*}
\int_{\U^{1}} \<\Ga(u)(\n_{A}u, \n_{A}u),\partial^{2}_{\theta,\alpha} u\>&=\int_{\U^1}\<\Ga(u)(\n_{A}u, \n_{A}u),\Ga(u)(\pa u, \pa u)\>\\
&\le C\sup_{\C_t}|\n_Au|^2\int_{\U^1}|\p_{\th,\al}u|^2,
\end{align*}
and by \eqref{Decay-of-f},
\[\int_{\U^{1}} \langle\pa f,\pa u\rangle\le C\ep e^{-\ka t}\(\int_{\U^1}|\p_{\th,\al}u|^2\)^{\frac12}.\]
Therefore, we arrive at
\begin{equation}\label{eq:Theta}
\Th''\ge 2\int_{\U^{1}} |\partial_{t}\pa u|^{2} +2\int_{\U^{1}} |\partial^{2}_{\theta,\alpha} u|^{2}
-C\sup_{\C_t}|\n_Au|^2\Th-C\ep e^{-\ka t}\Th^{\frac12}.
\end{equation}

Following R{\aa}de~\cite{Rade1}, we set $\gamma^{2}(t)=\Th(t)$. Taking derivative and using H\"older inequality, we get
\[ 2\ga\ga'=\Th'=2\int_{\U^1}\<\p_{\th,\al}u, \p_t\p_{\th,\al}u\>d\th\le 2\ga \(\int_{\U^1}|\pa u|^{2}d\th\)^{\frac12}.\]
It follows
$$(\gamma')^{2}\leq \int_{\U^{1}}|\partial_{t}\pa u|^{2}.$$
Also note that $\Th''=2(\ga')^2+2\ga\ga''$. Inserting into \eqref{eq:Theta}, we obtain
\[\gamma''\gamma\geq \int_{\U^{1}} |\partial^{2}_{\theta,\alpha} u|^{2}-C\sup_{\C_t} |\n_{A}u|^2\gamma^{2}-C\ep e^{-\ka t}\gamma\]
Now apply the Poincar\'{e} inequality in Corollary \ref{CO}, we get
\begin{equation}\label{ODI}
\gamma''\geq (C(\alpha)-C\sup_{\C_t}|\n_Au|^2)\gamma-C\ep e^{-\ka t},		
\end{equation}
where $C(\al)$ is the Poincar\'e constant belonging to $(0,\frac14]\cup \{1\}$.

\noindent\emph{Step 3: exponential decay of $|\na u|$.}\\
Next we apply the comparison principle on the differential inequality \eqref{ODI} to get an exponential bound on $\ga$.
We start by inserting the bound $|\na u|\le C\ep$ into \eqref{ODI}, yielding
$$\gamma''\geq \de^2\gamma-C\ep e^{-\ka t},$$
where $\delta^{2}=C(\alpha)-C\ep^2<1$ and $\ka>1$. Suppose $\gamma(T_{1})=a$ and $\gamma(T_{2})=b$ for $T_{2}\geq T_{1}\geq T_{0}+1$, where by $\ep$-regularity $a,b\le C\ep$. Consider a comparison function
$$g_{0}=C\ep\(2(e^{-\delta(t-T_{1})}+e^{-\delta(T_{2}-t)})-c_0 e^{-t}\),$$
where $c_0= \frac{1}{1-\delta^{2}}$. It is easy to check that $g_0$ satisfies
$$g''_{0}-\delta^{2}g_{0}+C\ep e^{-t}\leq 0,$$
with boundary value
$$g_{0}(T_{1})\geq a,\,g_{0}(T_{2})\geq b.$$
Thus, the comparison principle implies that for all $T_{1}\leq t\leq T_{2}$,
\begin{equation}\label{e:ga1}
\gamma(t)\leq g_{0}(t)\le C\ep (e^{-\delta(t-T_{1})}+e^{-\delta(T_{2}-t)}).
\end{equation}

Now, using the exponential bound \eqref{e:ga1} and \eqref{eq:H} of $H$ from Step 1, we deduce an exponential bound for the energy
$$E_A(u,P_t)= 2\int_{t-1}^{t+1}(H(s)+\ga^2(s))ds\leq C(\ep^2 e^{-\ka t}+g_{0}^2).$$
By the $\ep$-regularity, we can bound $|\na u|$ by
\begin{equation}\label{e:nau1}
\sup_{\C_t}|\n_{A}u|\leq C \(E_A(u,P_t\)^{\frac{1}{2}}+C\ep e^{-\ka t})\leq C(\ep e^{-\frac{\ka}{2}t}+g_{0}).
\end{equation}

\noindent\emph{Step 4: sharp decay estimate of $\n_Au$ by iteration}.\\
To improve the exponential decay estimate to an optimal exponent, we need to iterate the above arguments.

From Step 3, we have the exponential bound \eqref{e:nau1} of $|\na u|$. In view of \eqref{e:H1} in Step 1, we can first improve the bound on $H$ to
\begin{equation}\label{e:H2}
|H(t)|\le \int_t^\infty|u_t(s)||f(s)|ds\le C(\ep^2 e^{-\frac32\ka t}+\ep e^{-\ka t}g_0).
\end{equation}
Next we can rewrite inequality \eqref{ODI} in Step 2 as
\begin{align*}
\gamma^{''}-C(\alpha)\gamma&\geq -C\ep^2g^{2}_{0}-C\ep e^{-\ka t}\\
&\geq -C\ep^2(e^{-2\delta(t-T_{1})}+e^{-2\delta(T_{2}-t)})-C\ep e^{-\ka t}.
\end{align*}
Then we construct another comparison function by
\[ g_{1}(t)=C\ep\(2(e^{-\sqrt{C(\alpha)}(t-T_{1})}+e^{-\sqrt{C(\alpha)}(T_{2}-t)})
-(e^{-2\delta(t-T_{1})}+e^{-2\delta(T_{2}-t)})-c_1 e^{-\ka t}\).\]
where $c_1 =\frac{1}{\ka^2-C(\alpha)}$ and we use the assumption on $\ka$ to make sure $\ka^2>1\ge C(\al)$. One can verify that $g_1$ satisfies
$$
g^{''}_{1}-C(\alpha)g_{1}\leq  \gamma^{''}-C(\alpha)\gamma,
$$
with boundary value
$$
g_{1}(T_{1})\geq a,g_{1}(T_{2})\geq b,
$$
It follows again by comparison principle that
$$\gamma(t)\leq g_{1}(t)\leq C\ep\(e^{-\sqrt{C(\alpha)}(t-T_{1})}+e^{-\sqrt{C(\alpha)}(T_{2}-t)}\).
\eqno{(3.11)}$$
Letting $T_{2}\to \infty$ and $T_{1}=T_0+1$, we obtain
\begin{equation}\label{e:gamma}
\gamma(t)\leq C\ep e^{-\sqrt{C(\alpha)}(t-T_0)}.
\end{equation}

Finally, combining \eqref{e:H2} and \eqref{e:gamma}, we get the sharp exponential decay of the energy
\begin{equation}\label{e:EA}
\begin{aligned}
E_A(u,P_t)&=2\int_{t-1}^{t+1}\(H(s)+\ga^{2}(s)\)ds\\
&\le C\ep^2(e^{-\frac{3}{2}\ka t}+e^{-\ka t}e^{-\de(t-T_0)}+e^{-2\sqrt{C(\alpha)}(t-T_0)})\\
&\le C\ep^2 e^{-2\sqrt{C(\alpha)}(t-T_0)}.
\end{aligned}
\end{equation}
where the last inequality follows from the assumption that $\ka\ge \frac{4}{3}$. Applying the $\ep$-regularity once more, we get
\[ \sup_{\C_t}|\na u| \leq C(E_A(u,P_t)^{\frac{1}{2}}+\ep e^{-\ka t})\le C\ep e^{-\sqrt{C(\alpha)}(t-T_0)}.\]
\end{proof}

Now Theorem~\ref{t:twisted-harmonic-map} is a simple corollay of Theorem~\ref{TH5}.

\begin{proof}[Proof of Theorem\ref{t:twisted-harmonic-map}]
Since the twisted harmonic map $u$ satisfies equation \eqref{eq:EL12} with $f=0$ on the cylinder $\C$, the estimate of $\n_{A}u$ follows form
Theorem \ref{TH5} directly. In fact, in view of the exponential decay of energy \eqref{e:EA}, we can also bound higher order derivatives by $\ep$-regularity theorem (c.f. Remark \ref{r:reg}), i.e.
$$
\sup_{\C_t}|\n^{k}_{A}u|\leq C_{k}E_A(u,P_t)\leq C_k\ep e^{-\de_{\al} (t-T_0)},
$$
where $k\geq 1$ and $\de_\al=\sqrt{C_\al}$.

Now translating from the cylindrical coordinates back to the polar coordinates by the conformal change \eqref{conf-to-c}, we obtain
$$\sup_{S_{r}}r^k|\n^{k}_{A}u|=\sup_{\C_t}|\n^{k}_{A}u|\leq  C_{k}\ep \(\frac{r}{r_0}\)^{\de_{\al}}.$$
\end{proof}

\section{Decay estimates of YMH fields}\label{s:Decay-cur-section}

In this section, we establish exponential decay estimates of YMH fields defined on a punctured disk, by a similar method as the one for twisted harmonic maps in last section. Since the connection is not flat, the argument is considerably more complicated. Indeed, to achieve an optimal estimate, we need to apply a bootstrap argument to the Euler-Lagrangian equation of YMH fields, which is now a coupled system.

Let $(A,u)\in \A\times \S$ be a YMH field defined on the punctured disk $\D_{r_0}^*$ with energy
\begin{equation}\label{e:energy}
  E(A,u)=\int|F_A|^2dv+\int|D_Au|^2dv=: \ep^2\le \min\{\ep_{Uh}^2, \ep_0^2\}.
\end{equation}
After the conformal transformation \eqref{conf-to-c}, we again identify $\D_{r_0}^*$ with the cylinder $\C=[T_0,+\infty)\times \U^1$. Again we denote $P_t=[t-1,t+1]\times \U^1$ and $\C_t=\{t\}\times \U^1$. Then $(A,u)$ satisfies equation \eqref{eq:EL8} on cylinder $\C$. Moreover, for any $t\geq T_{0}+1$,
\begin{equation}\label{Decay-condi}
\lVert F_{A}\rVert_{L^{2}(P_{t})}\leq C\ep e^{-t}, \quad \lVert \n_{A}u\rVert_{L^{2}(P_{t})}\leq \varepsilon.
\end{equation}

For any $t\ge T_0+1$ and $\th\in \U^1$, by \eqref{Decay-condi} and Theorem \ref{TH2}, there exists a Coulomb gauge on a unit ball $\D \subset P_{t}$ centered at $(t,\th)$, such that equation \eqref{eq:EL8} becomes an elliptic system
$$
\left\{\begin{array}{l}
\Delta A+ [A,dA] +[A,[A,A]]-(2\ast dA+\ast[A,A])d\th=-e^{-2t}u^{*}(\n_{A}u),\\[1ex]
\Delta u-\Ga(\n_{A} u,\n_{A}u)+2A.du+A^{2}.u=0.
\end{array} \right.
$$
Then a standard argument yields the following $\ep$-regularity theorem (cf. Theorem \ref{TH3} and Remark~\ref{r:reg}).

\begin{lemma}\label{Decay-cur}
Let $(A,u)$ be a smooth solution to equation \eqref{eq:EL8} on cylinder $\C$ with its energy satisfying condition \eqref{e:energy}. Then for any $k\geq 0$, there exists a constant $C_{k}>0$, such that for all $t\geq T_{0}+1$, there holds
$$\sup_{\C_t}| \nabla^{k}_{A}F_{A}|\leq C_{k} (\lVert F_{A}\rVert_{L^{2}(P_{t})}+e^{-2t}\lVert \n_{A}u\rVert_{L^{2}(P_{t})})\leq C_{k}\ep e^{-t},$$
$$\sup_{\C_t}|\nabla^{k}_{A}u|\leq C_{k} (\lVert F_{A}\rVert_{L^{2}(P_{t})}+\lVert \n_{A}u\rVert_{L^{2}(P_{t})})\le C_k\ep.$$
\end{lemma}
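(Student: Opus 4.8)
The plan is to reduce everything to the local $\ep$-regularity machinery already set up, namely Uhlenbeck's Coulomb gauge (Theorem~\ref{TH2}) together with the elliptic estimates of Theorem~\ref{TH3} and their bootstrap in Remark~\ref{r:reg}, being careful to track the conformal weight $e^{-2t}$ that appears in the rescaled equation~\eqref{eq:EL8}. Since $|\n_A^k F_A|$ and $|\n_A^k u|$ are gauge-invariant quantities, it suffices to bound them pointwise in any convenient local gauge. Fix $t\ge T_0+1$ and $\th\in\U^1$, and let $\D\subset P_t$ be the Euclidean unit ball centered at $(t,\th)$ (recall $\C$ carries the flat metric $dt^2+d\th^2$, so $\D$ is isometric to a standard unit disk). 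By the energy decay \eqref{Decay-condi} we have $\Norm{F_A}_{L^2(\D)}\le\Norm{F_A}_{L^2(P_t)}\le C\ep e^{-t}$ and $\Norm{\n_Au}_{L^2(\D)}\le\ep$; choosing $\ep$ small enough (using \eqref{e:energy}, so that $C\ep\le\ep_{Uh}$ and $\ep\le\ep_0$) both smallness hypotheses needed below are met. Theorem~\ref{TH2} then furnishes a Coulomb gauge on $\D$ with $d^*A=0$ and $\Norm{A}_{W^{1,p}(\D)}\le C_{Uh}\Norm{F_A}_{L^p(\D)}$, in which \eqref{eq:EL8} becomes the coupled elliptic system displayed in Section~\ref{s:Decay-cur-section}.

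Next I would apply the proof of the $\ep$-regularity Theorem~\ref{TH3} (and its higher-order version, Remark~\ref{r:reg}) to this system on the half-ball $\D_{1/2}$. Two modifications relative to the flat-metric system \eqref{eq:EL6} must be accommodated: the extra term $-(2\ast dA+\ast[A,A])d\th$ coming from expanding $D_A^*(e^{2t}F_A)$, which is at most first order in $A$ and hence harmless since $\Norm{A}_{W^{1,p}(\D)}$ is small; and the factor $e^{-2t}$ multiplying the source $u^*(\n_Au)$ in the $A$-equation. For the $u$-equation this yields, for every $k\ge 0$,
\[
\sup_{\D_{1/2}}|\n_A^k(u-\b u)|+\sup_{\D_{1/2}}|\n_A^kF_A|\le C_k\(\Norm{\n_Au}_{L^2(\D)}+\Norm{F_A}_{L^2(\D)}\).
\]
For the $A$-equation, every term other than the source is either linear in $F_A$ or at least quadratic in $A$ and so controlled by $\Norm{A}_{W^{1,p}(\D)}\le C\Norm{F_A}_{L^p(\D)}$, while $|u^*\n_Au|\le C|\n_Au|$ because $M$ is compact; carrying the $e^{-2t}$ through the elliptic estimate and bootstrapping gives the sharper bound
\[
\sup_{\D_{1/2}}|\n_A^kF_A|\le C_k\(\Norm{F_A}_{L^2(\D)}+e^{-2t}\Norm{\n_Au}_{L^2(\D)}\).
\]

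Finally I would evaluate these at the center $(t,\th)$, use $\Norm{F_A}_{L^2(\D)}\le\Norm{F_A}_{L^2(P_t)}$ and $\Norm{\n_Au}_{L^2(\D)}\le\Norm{\n_Au}_{L^2(P_t)}$, insert the bounds \eqref{Decay-condi}, and note $e^{-2t}\le e^{-t}$ (since $t>0$) to obtain $|\n_A^kF_A|(t,\th)\le C_k\ep e^{-t}$ and $|\n_A^ku|(t,\th)\le C_k\ep$. As the right-hand sides are independent of $\th$, taking the supremum over $\th\in\U^1$ gives the stated estimates on $\C_t$. The only genuinely delicate point is the second step: one must make sure the conformal weight $e^{-2t}$ in \eqref{eq:EL8} is propagated through the Coulomb-gauge elliptic estimate for $A$, so that the curvature inherits the decay rate $e^{-t}$ rather than being merely bounded — this is exactly what makes the first inequality sharp and what the bootstrap in the next section needs as input (with $\ka$ large).
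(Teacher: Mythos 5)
Your proposal is correct and follows essentially the same route the paper intends: the paper states Lemma~\ref{Decay-cur} without a written proof, citing only the Coulomb-gauge elliptic system displayed just above it together with Theorem~\ref{TH3} and Remark~\ref{r:reg}, and your argument is precisely that standard localization --- Uhlenbeck gauge on a unit ball in $P_t$, elliptic bootstrap, and careful propagation of the $e^{-2t}$ weight into the curvature estimate. The only cosmetic difference is that you correctly phrase the $u$-estimate in terms of $u-\b u$ for $k=0$, which is what the paper implicitly means.
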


\subsection{Reduced equation}\label{ss: redu-eq}

To obtain decay estimates of $(A,u)$, it is more convenient to work on the balanced temporal gauge which is globally defined on the cylinder. We will choose the gauge such that $A$ is constant on the circle at infinity as below.

By Theorem~\ref{TH6}, the limit holonomy $\Hol(A)=\lim_{t\to \infty}\Hol(A,\C_t)$ exists. Applying Lemma \ref{LM1}, we can find a temporal gauge, such that
\begin{equation}\label{good-gauge}
A=ad\theta,\quad \lim_{t\to \infty} a(t,\theta)=\alpha.
\end{equation}
where $a\in C^\infty(\C,\g)$ and $\al\in\mathfrak{g}$ is a constant such that $\Hol(A)=[\exp(-2\pi\alpha)]$.

Recall that in this gauge, the equation of $u$ has the extrinsic form \eqref{eq:EL10}, i.e.
\[ u_{tt}+\partial^{2}_{\theta,a} u=\Ga(u)(\n_{A}u,\n_{A}u)\]
Setting $\p_{\th,\al}=\p_{\th}+\al$, we can rewrite the equation as
\begin{equation}
\label{eq:EL16}
u_{tt}+\partial^{2}_{\theta,\alpha} u=\Ga(u)(\n_{A}u,\n_{A}u)-f(A,u),
\end{equation}
where
\[ f(A,u)=\p_{\th,a}^2u-\p_{\th,\al}^2u.\]

\begin{lemma}\label{Decay-f}
$f=f(A,u)$ is exponentially bounded by
$$\sup_{\C_t}(| f|+ | \pa f|)\leq C\ep e^{-t}.$$
\end{lemma}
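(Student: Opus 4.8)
The plan is to expand $f(A,u)=\p_{\th,a}^2u-\p_{\th,\al}^2u$ explicitly in terms of the difference $b:=a-\al$, and then bound each resulting term using the exponential decay of $b$ together with the pointwise bounds on $u$ and its derivatives supplied by the $\ep$-regularity Lemma~\ref{Decay-cur}. Writing $\p_{\th,a}u=\p_\th u + a.u=\p_{\th,\al}u+b.u$, one computes
\[
f=\p_{\th,a}^2u-\p_{\th,\al}^2u=\p_{\th,\al}(b.u)+b.(\p_{\th,\al}u)+b.(b.u),
\]
so that schematically $f$ is a sum of terms of the form $(\p_\th b).u$, $b.(\p_{\th,\al}u)$ and $b.(b.u)$. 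Each term is linear or quadratic in $b=a-\al$ and involves at most one derivative of $u$ (and no derivative on $b$ beyond first order). The key input is therefore an exponential decay estimate for $b=a-\al$ and $\p_t b=\p_t a$, i.e. the curvature $F_A=\p_ta\,dt\wedge d\th$, on each circle $\C_t$.

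The main step is thus to establish $\sup_{\C_t}(|b|+|\p_\th b|+|\n_A b|)\le C\ep e^{-t}$. For $\p_t a$ this is exactly the curvature bound in Lemma~\ref{Decay-cur}, which gives $\sup_{\C_t}|\n_A^k F_A|\le C_k\ep e^{-t}$; note that in the balanced temporal gauge $F_A=\p_ta\,dt\wedge d\th$, so $\sup_{\C_t}(|\p_t a|+|\p_\th\p_t a|)\le C\ep e^{-t}$. To pass from $\p_t a$ to $a-\al$ itself, I would use the relation $\lim_{t\to\infty}a(t,\theta)=\al$ from \eqref{good-gauge} and integrate: for any $\theta$,
\[
|a(t,\theta)-\al|=\Bigl|\int_t^\infty \p_s a(s,\theta)\,ds\Bigr|\le\int_t^\infty C\ep e^{-s}\,ds\le C\ep e^{-t},
\]
and similarly $|\p_\theta(a-\al)|=|\p_\theta a|\le\int_t^\infty|\p_\theta\p_s a(s,\theta)|\,ds\le C\ep e^{-t}$. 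The same argument applied to the higher-order curvature bounds controls $\n_A^k(a-\al)$ and in particular $\p_\theta$-derivatives of $\p_\theta b$, which is what is needed to bound $\pa f=\p_{\th,\al}f$.

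Once the decay of $b$ and its first derivatives is in hand, I would simply combine the estimates: each term in $f$ and in $\pa f=\p_{\th,\al}f$ is a product of a factor that decays like $\ep e^{-t}$ (coming from $b$, $\p_\theta b$, or one of their $\p_{\th,\al}$-derivatives) with a factor that is bounded by $C\ep$ or $C$ (coming from $u$, $\p_{\th,\al}u$, $\p_{\th,\al}^2 u$, which are all controlled by Lemma~\ref{Decay-cur}, and the skew-symmetric matrices representing $b$ in $\so(K)$ whose operator norm is $|b|$). The quadratic term $b.(b.u)$ decays even faster, like $\ep^3 e^{-2t}$, hence is harmlessly absorbed. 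Summing, $\sup_{\C_t}(|f|+|\pa f|)\le C\ep e^{-t}$, as claimed. I expect the only mild subtlety to be bookkeeping the covariant versus partial derivatives and checking that $\pa f=\p_{\th,\al}f$ still only involves $u$ up to second order and $b$ up to second $\theta$-derivative, both of which are covered by the higher-order bounds in Lemma~\ref{Decay-cur} and the integration argument above; there is no genuine analytic obstacle here, since the estimate is a direct consequence of the $\ep$-regularity already proved.
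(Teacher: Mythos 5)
Your proposal is correct and follows essentially the same route as the paper: expand $f$ schematically in $b=a-\al$, obtain $|a-\al|\le C\ep e^{-t}$ by integrating the curvature bound, get $|\pa a|\le C\ep e^{-t}$ by integrating $|\p_t\pa a|$ (using that $\pa a\to\pa\al=0$, which the paper justifies via $C^1$-convergence of $a(t,\cdot)$ to $\al$), and combine with the $\ep$-regularity bounds on $u$. The covariant-versus-partial bookkeeping you flag is exactly the point the paper handles by writing $\p_{\th,\al}\p_t a=\p_{\th,a}\p_t a-(a-\al)\p_t a$, and it causes no difficulty.
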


\begin{proof}
A simple computation yields
\[ f=(a-\alpha)^{2}\cdot u+2(a-\alpha)\pa u+\pa a\cdot u.\]
Since in the temporal gauge $A=ad\th$, we have $F_A=\p_ta dt\wedge d\th$ and $\lim_{t\to \infty}a=\al$. By Lemma~\ref{Decay-cur}, the curvature is exponentially bounded by $|F_A|\le C\ep e^{-t}$. Thus
\[ |a-\al|\le \int_{t}^\infty|\p_ta|dt\le C\ep e^{-t}.\]
This together with the bound $|\n_Au|\le C\ep$ gives the exponential bound for the first two terms of $f$.
For the third term, note that
\[ |\partial_{t}\pa a|=|\p_{\th,\al}\p_ta|= |\p_{\th,a}\p_ta-(a-\al)\p_ta|\le |\n_AF_A|+|a-\al||F_A|.\]
Again using  Lemma \ref{Decay-cur}, we have $|\p_t \pa a|\le C\ep e^{-t}$ and $\lim_{t\to \infty}\p_t\pa a=0$. Hence $\pa a$ converges as $t\to \infty$, which implies $a(t,\cdot)$ converges to $\al$ in $C^1(\U^1)$. Therefore,
\[ \lim_{t\to \infty}\pa a=\pa \(\lim_{t\to \infty}a\)=\pa \al=0.\]
Now integrating from $t$ to $\infty$, we get
\[ |\pa a|\le\int_t^\infty|\p_t\pa a|dt \le C\ep e^{-t},\]
and the exponential bound of $f$ follows.

Next, to estimate $\p_{\th,\al} f$, we compute
\begin{align*}
\pa f=&2(a-\al)\pa a \cdot u +(a-\al)^2\pa u+ 3\pa a \pa u \\
& + 2(a-\al)\pa^2u +\pa^2 a\cdot u.
\end{align*}
From previous discussion, we already have the exponential bound for $a-\al$ and $\pa a$, thus the first three terms are also exponentially bounded. The exponential bound for the rest two terms follows similarly from the higher order bound $|\p_{\th,a}^2u|\le C\ep$ and
$$|\p_{\th,a}^2a|\le \int_t^\infty|\na^2F_A|\le C\ep e^{-t}$$
given by Lemma~\ref{Decay-cur}. Thus $\pa f$ is also exponentially bounded as desired.
\end{proof}

\subsection{Proof of the main Theorem \ref{TH0}}

Following a similar but more involved argument as in Theorem~\ref{TH5}, we first prove a sharp decay estimate of $\n_A u$ and $F_{A}$, from which the desired higher order estimates of $(A,u)$ in Theorem~\ref{TH0} will follow easily.

\begin{theorem}\label{MTH}
Let $(A,u)\in \A\times\S$ be  a solution to \eqref{eq:EL8} on cylinder $\C$, which satisfies the energy conditions \eqref{Decay-condi}. Then there exists a smooth map $\al_1:\U^1\to \g$ satisfying $\pa \al_1=0$ such that for $t\geq T_{0}+1$,
\[\begin{aligned}
\sup_{\C_t}| \n_{A}u|&\leq C\ep e^{-\sqrt{C(\al)} (t-T_0)},\\
\sup_{\C_t}| e^{2t}*F_{A}-\al_1|&\leq C \ep e^{-\sqrt{C(\al)} (t-T_0)},
\end{aligned}\]
where $C(\al)$ is the Poincar\'e constant defined by \eqref{PC}.
\end{theorem}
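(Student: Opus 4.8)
\textbf{Proof strategy for Theorem \ref{MTH}.} The plan is to run the same four-step scheme as in the proof of Theorem~\ref{TH5}, but now for the coupled pair $(A,u)$, interleaving the estimate of $\n_A u$ with the estimate of the curvature $F_A$ via a bootstrap. We work in the balanced temporal gauge \eqref{good-gauge}, so that $A=a\,d\th$, $\lim_{t\to\infty}a=\al$, $F_A=\p_t a\,dt\wedge d\th$, and $u$ satisfies the reduced equation \eqref{eq:EL16} with inhomogeneous term $\Ga(u)(\n_Au,\n_Au)-f(A,u)$, where by Lemma~\ref{Decay-f} the term $f=f(A,u)$ is exponentially bounded: $\sup_{\C_t}(|f|+|\pa f|)\le C\ep e^{-t}$. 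Since $\ka=1<\tfrac43$, the earlier theorem does not apply verbatim; I will instead carry the $e^{-t}$ bound as a genuine source term and observe at the end that $e^{-t}$ decay beats $e^{-\sqrt{C(\al)}(t-T_0)}$ only when $\sqrt{C(\al)}<1$ — but in the critical case $C(\al)=1$ (i.e.\ $\de_\al=1$) one must be more careful, so the comparison functions must be chosen to absorb an $e^{-t}$ forcing term against an operator $\p_t^2-C(\al)$ with $C(\al)\le 1$; this is exactly the role of the correction terms $c_0 e^{-t}$, $c_1 e^{-\ka t}$ in Step~3--4 of Theorem~\ref{TH5}, and here I take a resonance-type comparison function $t\,e^{-t}$ when $C(\al)=1$.

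\textbf{Steps.} First, as in Step~1--2 of Theorem~\ref{TH5}, I split $E_A(u)$ into the angular energy $\Th(t)=\int_{\U^1}|\pa u|^2 d\th$ and $H(t)=\tfrac12\int_{\U^1}(|u_t|^2-|\pa u|^2)d\th$, and derive $H'=\int_{\U^1}\langle u_t, \Ga(u)(\n_Au,\n_Au)-f\rangle=-\int_{\U^1}\langle u_t,f\rangle$ (the $\Ga$-term vanishes by orthogonality of second fundamental form against $u_t\in T_uM$), hence $|H(t)|\le C\ep^2 e^{-t}$. Then for $\ga=\sqrt{\Th}$, using the Poincar\'e inequality of Corollary~\ref{CO} \emph{for the limit connection} $\al$ — not for $a(t,\cdot)$, whose Poincar\'e constant need not be bounded below — I get $\ga''\ge (C(\al)-C\sup_{\C_t}|\n_Au|^2)\ga - C\ep e^{-t}$, where the $f$-contribution $-2\int_{\U^1}\langle \pa f,\pa u\rangle$ and the defect $f(A,u)=\pa^2 u-\p_{\th,\al}^2 u$ built into \eqref{eq:EL16} are all controlled by Lemma~\ref{Decay-f}; note this is the place where I must pass from $\p_{\th,a}$ to $\p_{\th,\al}$ in order to legally apply Corollary~\ref{CO}, precisely because the $\U^1$-Poincar\'e inequality is only available for the constant connection $\al$. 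Second, starting from the crude bound $|\n_Au|\le C\ep$ from Lemma~\ref{Decay-cur}, a comparison argument (identical in structure to Step~3 of Theorem~\ref{TH5}) yields $\ga(t)\le C\ep\,e^{-\de(t-T_0)}$ with $\de^2=C(\al)-C\ep^2$, and feeding this plus $|H|\le C\ep^2 e^{-t}$ into $\ep$-regularity (Lemma~\ref{Decay-cur}) gives a first exponential bound on $|\n_Au|$. Third, I iterate once more (Step~4 of Theorem~\ref{TH5}): improve $H$, rewrite the differential inequality as $\ga''-C(\al)\ga\ge -C\ep^2 g_0^2 - C\ep e^{-t}$, and build a comparison function with leading term $e^{-\sqrt{C(\al)}(t-T_0)}$ together with subleading corrections killing the $g_0^2$ and $e^{-t}$ forcing; letting $T_2\to\infty$ gives $\ga(t)\le C\ep\,e^{-\sqrt{C(\al)}(t-T_0)}$, hence the sharp energy decay $E_A(u,P_t)\le C\ep^2 e^{-2\sqrt{C(\al)}(t-T_0)}$ and, by $\ep$-regularity, the claimed bound on $\sup_{\C_t}|\n_Au|$.

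\textbf{The curvature estimate.} For $F_A$ I use equation \eqref{e:a}, i.e.\ $*\na(e^{2t}\p_t a)=u^*\na u$, so that the $\g$-valued function $b(t):=e^{2t}*F_A=e^{2t}\p_t a$ satisfies $\p_t b\,d\th - \p_{\th,a}b\,dt = *\,(u^*\na u)$ componentwise, and in particular the $\th$-average $\bar b(t)=\tfrac1{2\pi}\int_{\U^1}b\,d\th$ obeys $\bar b'(t)=O(\sup_{\C_t}|\n_Au|^2)=O(\ep^2 e^{-2\sqrt{C(\al)}(t-T_0)})$, which integrates to show $\bar b(t)\to$ some constant vector $\bar\al_1\in\g$ with $|\bar b(t)-\bar\al_1|\le C\ep e^{-2\sqrt{C(\al)}(t-T_0)}$; simultaneously the oscillation $b-\bar b$ on each circle is controlled by the Poincar\'e inequality and the $\th$-derivative $\p_{\th,a}b$, which by \eqref{e:a} is again $O(|\n_Au|^2)$, giving $\sup_{\C_t}|b-\bar b|\le C\ep e^{-\sqrt{C(\al)}(t-T_0)}$ (the weaker rate here is harmless). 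Combining, $\sup_{\C_t}|e^{2t}*F_A-\al_1|\le C\ep e^{-\sqrt{C(\al)}(t-T_0)}$ with $\al_1:=\bar\al_1$ a constant (so trivially $\pa\al_1=0$); if one prefers a genuinely $\U^1$-dependent $\al_1$ one extracts it as the $C^0$-limit of $b(t,\cdot)$, which exists by the same integration. The main obstacle is bookkeeping in Step~3--4: one must choose the comparison functions so that the $e^{-t}$ source from $f(A,u)$ and the $g_0^2$ source are both dominated \emph{for every} value $C(\al)\in(0,\tfrac14]\cup\{1\}$, the borderline $C(\al)=1$ requiring a resonant comparison function (a multiple of $(t-T_0)e^{-(t-T_0)}$ plus lower order), and one must consistently use the limit-connection Poincar\'e constant $C(\al)$ rather than the non-uniform $C(A(t,\cdot))$ throughout.
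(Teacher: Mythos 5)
Your overall scheme (reduced equation \eqref{eq:EL16}, angular energy, $\ga=\sqrt{\Th}$, comparison principle, Poincar\'e inequality for the \emph{limit} connection $\al$, curvature via \eqref{e:a}) is the paper's, but there is a genuine gap at the one point where you deviate: your treatment of the $e^{-t}$ forcing in the critical case $C(\al)=1$. You propose to ``carry the $e^{-t}$ bound as a genuine source term'' all the way through the final comparison argument and to handle the resonance $\ka=1=\sqrt{C(\al)}$ with a comparison function containing $(t-T_0)e^{-(t-T_0)}$. But the particular solution of $y''-y=-e^{-t}$ is $\tfrac{t}{2}e^{-t}$, so this comparison function only yields $\ga(t)\le C\ep\,(1+t-T_0)\,e^{-(t-T_0)}$, which is strictly weaker than the claimed $\ga(t)\le C\ep\,e^{-(t-T_0)}$; you cannot then assert, as you do, that ``letting $T_2\to\infty$ gives $\ga(t)\le C\ep e^{-\sqrt{C(\al)}(t-T_0)}$.'' Note that $C(\al)=1$ is exactly the case of trivial limit holonomy, i.e.\ the case needed for the removable-singularity corollary, so the loss is not cosmetic. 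The paper's proof avoids the resonance by making the curvature estimate an \emph{intermediate} step rather than an afterthought: the preliminary decay $\sup_{\C_t}|\n_Au|\le C\ep(e^{-t/2}+e^{-\de(t-T_0)})$ is fed into $*\n_A(e^{2t}\p_ta)=u^*\n_Au$ to show $e^{2t}\p_ta\to\al_1$ and $|F_A|\le Ce^{-2t}$, whence (rerunning Lemma~\ref{Decay-f}) $|f|+|\pa f|\le Ce^{-2t}\le C\ep e^{-\frac43 t}$ for $T_0$ large; with $\ka=\frac43$ one has $\ka^2>1\ge C(\al)$, the correction term $c_1e^{-\ka t}$ in Theorem~\ref{TH5} is non-resonant for every admissible $C(\al)$, and Theorem~\ref{TH5} then applies verbatim to give the sharp rate. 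Your proposal places the curvature estimate \emph{after} the sharp $u$-estimate and therefore never gets to use this upgrade of $f$; to repair it you would either have to adopt the paper's ordering or add a further iteration (sharp-up-to-$t$ bound $\Rightarrow$ curvature $\Rightarrow$ improved $f$ $\Rightarrow$ final comparison), which amounts to the same thing.

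A secondary inaccuracy: in your curvature step you assert $\bar b'(t)=O(\sup_{\C_t}|\n_Au|^2)$. The source term $u^*\n_Au$ in \eqref{e:a} is \emph{linear} in $\n_Au$ (by definition $\<u^*\n_Au,B\>=\<\n_Au,B.u\>$ with $|B.u|\le C|B|$ on the compact fiber), so the correct bound is $|\p_t(e^{2t}\p_ta)|\le C|\n_Au|$ and the integration gives $|e^{2t}*F_A-\al_1|\le C\ep e^{-\sqrt{C(\al)}(t-T_0)}$, not the quadratic rate $e^{-2\sqrt{C(\al)}(t-T_0)}$ you claim for the $\th$-average. This does not affect the statement of the theorem (the linear rate is exactly what is asserted), but the stronger decay you derive is not justified and, as the example in Section~\ref{s:optimal-example} indicates, should not be expected.
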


\begin{proof}
Since $u$ satisfies equation~\eqref{eq:EL16} and by Lemma~\ref{Decay-f}, $f$ is exponentially bounded with exponent $\ka=1$, we can apply Step 1-3 in the proof of Theorem~\ref{TH5} to get the exponential decay of the energy
\[   E_A(u,P_t)\le C\ep^2 (e^{-t}+e^{-2\de(t-T_0)}), \]
where $\de^2=C(\al)-C\ep^2$. It follows from the $\ep$-regularity that
\begin{equation}\label{e:decay-of-u-1}
  \sup_{\C_t}|\na u|\le C\ep (e^{-\frac{1}{2}t}+e^{-\de(t-T_0)}).
\end{equation}
	
However, we can not directly follow Step 4 in the proof since it requires exponential decay of $f$ with exponent $\ka>1$. To proceed, we first improve the decay estimate of $A$ and hence of $f$.

Recall that $A=ad\th$ satisfies the second equation of \eqref{eq:EL9} or equivalently \eqref{e:a}, i.e.
$$*\na(e^{2t}\p_ta)=u^*\na u.$$
It follows from \eqref{e:decay-of-u-1} that
\[ |\partial_{t}(e^{2t}\partial_{t}a)|+|\pa (e^{2t}\p_ta)|\le C|\na u|\le C\ep(e^{-\frac{1}{2}t}+e^{-\de(t-T_0)}) .\]
Thus $e^{2t}\p_t a$ converges to some $\al_1:\U^1\to \g$ as $t\to \infty$, such that $\pa \al_1=0$ and
$$|e^{2t}\p_t a-\al_1|\leq C \ep (e^{-\frac{1}{2}t}+e^{-\de(t-T_0)}).$$
Consequently $|F_A|=|\p_t a|\le Ce^{-2t}$ and Lemma \ref{Decay-cur} implies
$$\sup_{\C_t}|\n^{k}_{A}F_{A}|\leq C_{k} e^{-2t},$$
for any $k\geq 0$.

Now a similar argument as Lemma \ref{Decay-f} give an improved decay estimate of $f$ by
$$\sup_{\C_t}(| f|+|\pa f|)\leq C e^{-2t}\le C\ep e^{-\ka t},$$
where $\ka=\frac{4}{3}$ provided $T_0$ is sufficiently large.

Then we can iterate by directly applying Theorem~\ref{TH5} to get the optimal exponential bound of $u$, as desired.
\end{proof}

\begin{rem}
In general, since the limit $\al_1$ is not zero, the decay rate of $|F_A|$ can not be improved to $e^{-(2+\de_{\al})t}$, as is shown by our example in Section 6.
\end{rem}

Now we can complete the proof of Theorem~\ref{TH0}.

\begin{proof}[Proof of Theorem~\ref{TH0}]
By conformal changing $\D^*_{r_0}$ to the cylinder $\C=[T_0,\infty)\times \U^1$, the YMH field $(A,u)$ becomes a solution of equation \eqref{eq:EL8} which satisfies the energy condition~\eqref{Decay-condi}.

By Lemma~\ref{LM1}, we can choose a temporal gauge such that $A=ad\th$ and $\lim_{t\to\infty}a=\al$. Thus the first statement on the existence of limit holonomy $\Hol(A)$ follows directly.

By Theorem~\ref{MTH}, $F_A$ and $\na u$ is exponentially bounded by
\[ \sup_{\C_t}|F_A|\le Ce^{-2t}, \quad \sup_{\C_t}|\na u|\le C\ep e^{-\de_\al(t-T_0)},\]
where $\de_\al^2=C(A)$ is the Poincar\'e constant given by \eqref{PC}.

Then Lemma~\ref{Decay-cur} immediately gives the higher order decay estimates for $k\ge 1$
\[ \sup_{\C_t}|\na^k F_A|\le C_ke^{-2t}, \quad \sup_{\C_t}|\na^k u|\le C_k\ep e^{-\de_\al(t-T_0)}.\]

Finally, the second and third statements follows by translating above decay estimates back to polar coordinates of the disk $\D^*_{r_0}$.
\end{proof}

\begin{rem}		
The proof and hence the same estimates of Theorem~\ref{TH0} also hold true for YMH fields with moment maps $\mu\neq 0$, which satisfies the equation~\eqref{eq:EL}.
\end{rem}

\section{ An optimal example}\label{s:optimal-example}

The purpose of this section is to construct an example to show the estimates in Theorem~\ref{MTH} and hence our main Theorem \ref{TH0} are sharp. We actually construct a minimal YMH field which satisfies a first-order equation.

We first recall the definition of the so-called symplectic vortices. Let $(M,\om)$ be a compact symplectic manifold which supports a Hamiltonian action of a compactly connected Lie group $G$. The moment map is a smooth map $\mu: M\longrightarrow \g$, such that
$$\iota_{X_{\xi}}\om=d\langle\mu, \xi \rangle,\, \forall\xi\in \g,$$
where $X_{\xi}\in \Ga(TM)$ is the vector field generated by $\xi$. Moreover, $\mu$ is equivariant with respect to the adjoint action on $\g$, namely, $\mu(g.y)=g^{-1}\mu(y)g$, for any $g\in G$ and $y\in M$.

Let $\P$ be a $G$-principal bundle over a Riemann surface $(\Si, j)$, and $\F=\P\times_{G}M$ be the associated bundle. Let $J$ be a $G$-invariant $\om$-tamed almost complex structure on $M$. A pair $(A,\phi)\in \A\times \S$ is called a \emph{symplectic vortex} if
\begin{equation}\label{eq:EL17}
\begin{cases}
\bar{\p}_{A}\phi=0,\\[1ex]
*F_{A}+\mu(\phi)=0.
\end{cases}
\end{equation}
Here $*$ is the Hodge star operator and $ \bar{\p}_{A}u:=\frac{1}{2}(\n_{A}u+J\circ\n_{A}u\circ j)$. One can verify that a symplectic vortex satisfies the YMH field equation~\eqref{eq:EL}. In fact, the symplectic vortices are the minimizers of the YMH functional. More details on symplectic vortices can be found in \cite{Mundet-Tian} and \cite{RI}.

Next, we will construct a symplectic vortex with isolated singularity on a trivial bundle over $\D^*$ to show that the decay estimate of YMH field in Theorem \ref{MTH} is optimal. Again we identify $\D^*$ with an infinite cylinder $\C=(0,\infty)\times \U^1$ by a conformal translation. Let the fiber space be the standard sphere $\U^2\hookrightarrow \Real^3$, which supports a action of the group  $G=U(1)\hookrightarrow SO(3)$ by rotation around the $z$-axis. Suppose that $\F=\C\times \U^{2}\hookrightarrow\C\times \mathbb{R}^{3}$ is a trivial fiber bundle over $\C$. Since the Lie algebra is $\g=i\Real^1$, we may choose a connection $A=iad\theta$ where $a=a(t):\C\to \Real^1$ is a smooth function only depending on $t$. Now we consider a symplectic vortex $(A,u)$ which satisfies equation~\eqref{eq:EL17}, or equivalently
\begin{equation}\label{eq:EL18}
\begin{cases}
\p_{t}u=J(u)\p_{\th, ai}u=u\times \p_{\th, ai}u,\\
i\p_{t}a=e^{-2t}\mu(u).
\end{cases}
\end{equation}

Assume $u$ is rotational symmetric and has the form $u(t,\theta)=(\cos\theta \sin f(t), \sin\theta \sin f(t), \cos f(t))$. Recall that the moment map is simply $\mu(u)=i\cos f$. Thus, equation \eqref{eq:EL18} is reduced to
\begin{equation}\label{eq:EL19}
\begin{cases}
f'=-(1+a)\sin f,\\
a'=e^{-2t}\cos f.
\end{cases}
\end{equation}
There is a family of solutions to the first equation given by $f(t)=2\arctan( l e^{-\int_{0}^{t}(1+a(s))ds})$, where $l>0$. Letting $l=1$ and $f(0)=\frac{\pi}{2}$, we get
\begin{equation}\label{eq:EL20}
a'=e^{-2t}\frac{1-e^{-2\int_{0}^{t}(1+a(s))ds}}{1+e^{-2\int_{0}^{t}(1+a(s))ds}}.
\end{equation}
To proceed, we claim that there is a global solution $a$ defined in $(0,+\infty)$ for the ODE \eqref{eq:EL20} with any initial value $a_{0}$.  Actually, by the contraction mapping theory, we can easily obtain a short time solution $a$ of \eqref{eq:EL20}, and the maximal existent time $T$ only depends on the initial value $a_{0}$.  However, the equation \eqref{eq:EL20} shows that $a'$ is uniformly bounded for all $t>0$, which implies $a(t)$ globally exists.

Thus, we obtain a symplectic vortex $(A,u)$, which satisfies
$$| \n_{A}u|=(| u_{t}|^{2}+| \partial_{\theta,ai} u|^{2})^{\frac{1}{2}}=\sqrt{2}| f'(t)|=2\sqrt{2}|
1+a(t)|\frac{e^{-\int_{0}^{t}(1+a(s))ds}}{1+e^{-2\int_{0}^{t}(1+a(s))ds}}.$$
The equation \eqref{eq:EL19} shows $|a'|\le e^{-2t}$. Hence $| a(t)-\al|< \frac{1}{2}e^{-2t}$ for any $t>0$ and $a_{0}\neq -1$. If we take $a_{0}=-1+\ep$, where $\ep$ is a small positive constant, then \eqref{eq:EL19} implies $a'>0$ and
$$-1<a(t)<\al=a_{0}+\int_{0}^{\infty}\p_{t}adt< -\frac{1}{2}+\ep.$$
It follows
$$0<a'\leq e^{-2t}(1-e^{-2(1+\al)t})<e^{-2t}(1-e^{-2(\frac{1}{2}+\ep)t}).$$
Thus,
$$\al=a_{0}+\int_{0}^{\infty}\p_{t}adt<-\frac{1}{2}+(\ep-\frac{1}{3+2\ep})<-\frac{1}{2},$$
for small $\ep>0$.

Therefore, we conclude that $| F_{A}|\approx e^{-2t}$ and
$$|\n_{A}u|\approx e^{\int_{0}^{t}(a(s)-\al)ds}e^{-(1+\al)t}\approx e^{-(1+\al)t},$$
where $\approx$ denotes the both sides have same decay order. By definition~\eqref{PC}, the Poincar\'e constant is $C(\al)=(1+\al)^2$ since $\al\in (-1,-\frac{1}{2}]$. This shows that the decay estimates in Theorem \ref{MTH} can indeed by achieved and hence are optimal.

In particular, if $a(t)=\al$ is constant, then rotational symmetric map $u$ given by $f(t)=2arctan(e^{-\int_{0}^{t}(1+\al)ds})$ is a twisted harmonic map satisfying equation $\n^{*}_{A}\n_{A}u=0$.
A similar argument yields
$$
| \n_{A}u|=2\sqrt{2}\sqrt{C(\alpha)}\frac{e^{-\sqrt{C(\alpha)}t}}{1+e^{-2\sqrt{C(\alpha)}t}},
$$
if we take $\al\in (-1,-\frac{1}{2}]$.  Therefore, the estimates in Theorem \ref{TH5} are also optimal.

\section*{Acknowledgment}
B. Chen would like to express his deep gratitude to Professor Youde Wang for his instructions and encouragements. C. Song is partially supported by the Fundamental Research Funds for the Central Universities (Grant No. 207220170009, 207220180009).

\end{document}